\numberwithin{equation}{section} 
\theoremstyle{plain}
\newtheorem{theorem}{Theorem}
\newtheorem{lemma}{Lemma}
\newtheorem{proposition}{Proposition}
\theoremstyle{definition}
\theoremstyle{remark}
\newtheorem{remark}{Remark}
\begin{document}

\title{Exponential sums over primes in short intervals \\ 
and an application to the Waring--Goldbach problem}
\author{Bingrong Huang}
\address{School of Mathematics \\ Shandong University \\ Jinan \\Shandong 250100 \\China}
\email{brhuang@mail.sdu.edu.cn}
\date{\today}

\begin{abstract}
  Let $\Lambda(n)$ be the von Mangoldt function, $x$ real and $2\leq y \leq x$.
  This paper improves the estimate on the exponential sum over primes in short intervals
  \[
    S_k(x,y;\alpha) = \sum_{x< n \leq x+y} \Lambda(n) e\left( n^k \alpha \right)
  \]
  when $k\geq 3$ for $\alpha$ in the minor arcs.
  And then combined with the Hardy--Littlewood circle method, this enables us to investigate the Waring--Goldbach problem of representing a positive integer $n$ as the sum of $s$ $k$th powers of almost equal prime numbers, which improves the results in Wei and Wooley \cite{wei2014sums}.
\end{abstract}

\keywords{Exponential sums, von Mangoldt function, short intervals, Waring--Goldbach problem}

\maketitle
\tableofcontents

\section{Introduction} \label{sec: introduction}

Let $\Lambda(n)$ be the von Mangoldt function, $k\geq 1$ an integer, $x$ real and $2 \leq y \leq x$. The estimate of the exponential sum
over primes in short intervals
\begin{equation}
  S_k(x,y;\alpha) = \sum_{x < n \leq x+y} \Lambda(n) e\left(n^k \alpha\right)
\end{equation}
was first studied by I. M. Vinogradov \cite{vinogradov1939estimation} in 1939 with his elementary method. Since then this topic has attracted the interest of quite a number of authors. These sums arise naturally and play important roles when solving the Waring--Goldbach problems in short
intervals by the circle method. In particular, the case $k = 1$, i.e., the linear exponential sum over primes in short intervals, was studied quite extensively, because of its applications to the study of the Goldbach--Vinogradov theorem with three almost equal prime variables (see \cite{zhan1991representation} and the references therein).

For the case $k=2$, Liu and Zhan  \cite{liu1999estimation} first established a non-trivial estimate of $S_2(x,y;\alpha)$ for all $\alpha$ and all published results before their result are valid only for $\alpha$ in a very thin subset of $[0,1]$. In \cite{lv2007exponential}, L\"u and Lao improved the results in \cite{liu1999estimation} to be as good as what was previously derived from the Generalized Riemann Hypothesis.

In this paper we deal with $S_k(x,y;\alpha)$ in the general case $k\geq 3$. Let $y = x^\theta$ with $3/4 < \theta \leq 1$.
We set
\begin{equation} \label{eqn: Q}
  P = x^{2K\delta}, \quad \textrm{and} \quad  Q = x^{k-2} y^2 P^{-1},
\end{equation}
where
\begin{equation} \label{eqn: K & t_k}
  K = 2 t_k(t_k+2), \quad  t_k = k(k-1)
\end{equation}
are defined as in \cite[equations (1.2) and (2.1)]{wei2014sums}
and $\delta$ is a positive parameter which may depend on $k$.
By Dirichlet's theorem on Diophantine approximations, every real number $\alpha$ has a rational approximation $a/q$, where $a$ and $q$ are integers subject to
\begin{equation} \label{eqn: Dirichlet's theorem on Diophantine approximations}
  1\leq q\leq Q, \quad  (a,q)=1, \quad  |q\alpha-a|\leq 1/Q.
\end{equation}
We denote by $\mathfrak{M}$ the union of the \emph{major arcs}
\begin{equation*}
  \mathfrak{M}(q,a) = \{ \alpha\in [0,1) : |q\alpha-a|\leq Q^{-1} \},
\end{equation*}
with $0\leq a\leq q\leq P$ and $(a,q)=1$, and let $\mathfrak{m} = [0,1)\setminus\mathfrak{M}$ for the set of \emph{minor arcs} complementary to the set $\mathfrak{M}$.
In Liu and Zhan \cite{liu1996estimation} and Huang and Wang \cite{huang2015exponential}, they handle with $S_k(x,y;\alpha)$ for all $\alpha \in [0,1]$ by dividing this into several parts and then estimating them by different methods.
The work of Liu, L\"{u} and Zhan \cite{liu2006exponential} is focused on the major arc behavior (hence, the use of multiplicative methods) and provides a major arc estimate that complements the ``true minor arc bound'' from an earlier paper of Liu and Zhan \cite{liu1996estimation}.
Kumchev \cite{kumchev2013weyl} and Wei and Wooley \cite{wei2014sums} focus on the minor arcs.
The method of Kumchev \cite{kumchev2013weyl} is a variant of his earlier results on long sums \cite{kumchev2006weyl} that avoids the use of sieve methods and relies on Heath-Brown's identity instead.
Recently, Wei and Wooley \cite{wei2014sums} gives a substitute for a Weyl-type estimate for $S_k(x,y;\alpha)$, which makes use of Daemen's estimates in \cite{daemen2010asymptotic}
via a bilinear form treatment motivated by analogous arguments making use of Vinogradov's mean value theorem.
They establish an estimate of Weyl-type that delivers non-trivial
estimates throughout the set of minor arcs $\mathfrak{m}$ provided that $\theta$ is a real number with $5/6<\theta<1$.
We first state our main result for exponential sums.
\begin{theorem} \label{thm: exponential sums}
  Let $k\geq 3$. Let $\theta$ be a real number with $3/4 < \theta \leq 1$ and suppose that $0< \rho < \rho_k(\theta)$, where
  \begin{equation} \label{eqn: rho}
    \rho_k(\theta) = \min \left\{ \frac{\sigma_k (\theta - 3/4)}{8}, \delta \right\}, 
  \end{equation}
  with $\sigma_k = 1/(2t_k)$.
  Then, for any fixed $\varepsilon > 0$, we have
  \begin{equation}
    S_k(x,y;\alpha) \ll  y^{1-\rho +\varepsilon} + \frac{y x^\varepsilon}{(q + y^2 x^{k-2} |q\alpha - a|)^{1/(2k)}}.
  \end{equation}
\end{theorem}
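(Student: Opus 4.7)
The plan is to prove Theorem~\ref{thm: exponential sums} via a Heath-Brown type combinatorial decomposition of the von Mangoldt function, reducing $S_k(x,y;\alpha)$ to a bounded number of Type I and Type II bilinear sums. Each of these is then estimated against whichever of the two terms in the claimed bound is appropriate for its configuration: the second, Weyl-type, term captures the contribution of sums with a long smooth variable and is delivered by Daemen's short-interval Weyl estimates from~\cite{daemen2010asymptotic}, while the first term $y^{1-\rho+\varepsilon}$ captures the genuine bilinear saving.

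First I would apply Heath-Brown's identity to $\Lambda(n)$ at a depth calibrated to $k$, restrict to $x < n \leq x+y$, and subdivide dyadically so that, up to an $x^\varepsilon$ loss, $S_k(x,y;\alpha)$ becomes a combination of $O(\log^{O(1)}x)$ sums indexed by dyadic parameters $(M_1,\dots,M_r)$ with $M_1\cdots M_r \asymp x$ and $r = O(1)$. Grouping the $m_i$ into two blocks, each sum becomes either a Type I sum $\sum_{m\leq M} a_m \sum_{n\in I_m} e((mn)^k\alpha)$ with $|a_m|\ll x^\varepsilon$ and the inner range $I_m \subseteq (x/m,(x+y)/m]$ essentially complete, or a Type II sum $\sum_{m\sim M}\sum_{n\sim N} a_m b_n\, e((mn)^k\alpha)$ with both variables of intermediate size. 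For Type I sums, after fixing $m$ the inner sum is a smooth $k$th-power Weyl sum of length $\gg y/m$ at the perturbed frequency $m^k\alpha$; Daemen's estimate yields precisely the denominator $(q+y^2x^{k-2}|q\alpha-a|)^{1/(2k)}$ after tracking the rational approximation of $m^k\alpha$ in terms of that of $\alpha$, and summation over $m$ preserves this up to $x^\varepsilon$, producing the second term of the theorem uniformly in $\alpha$.

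The heart of the argument, and the source of the first term, is the Type II treatment. Here I would follow the bilinear strategy of Wei--Wooley~\cite{wei2014sums}: apply Cauchy--Schwarz in the variable of intermediate length, open the square, and substitute $h = n_1 - n_2$ to reduce matters to estimating short Weyl sums $\sum_m e(m^k(n_1^k - n_2^k)\alpha)$ on average over $n_1,n_2$. The key input is a short-interval mean-value theorem in the spirit of Vinogradov, whose strength via Wooley's efficient congruencing is quantified by the exponent $\sigma_k = 1/(2t_k)$ with $t_k = k(k-1)$ already appearing in the statement. The main obstacle is to choose the truncation parameter in Heath-Brown's identity together with the dyadic window $(M,N)$ so that the Type II bound supplies the full saving $\rho_k(\theta) = \sigma_k(\theta-3/4)/8$ uniformly across all dyadic configurations arising from the decomposition: the factor $\theta - 3/4$ measures the excess of $y=x^\theta$ over the critical length $x^{3/4}$, below which the Cauchy--Schwarz step ceases to gain anything, and the denominator $8$ records the combined loss from opening the square and from the bilinear splitting. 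Verifying that no dyadic range escapes both the Type I and the Type II treatment, and managing the boundary cases where $M$ or $N$ approaches $x^{1/2}$, is the delicate bookkeeping step on which the improvement over~\cite{wei2014sums} ultimately rests.
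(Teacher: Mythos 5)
Your overall architecture matches the paper's: a combinatorial identity for $\Lambda$ reduces $S_k(x,y;\alpha)$ to Type I and Type II sums, the Type I sums are handled by Daemen's short-interval Weyl estimates after tracking how the rational approximation of $m^k\alpha$ relates to that of $\alpha$, and the Type II sums are handled by Cauchy--Schwarz and differencing. The paper uses Vaughan's identity rather than Heath-Brown's; that difference is immaterial here, since either identity produces the same two configurations and the paper's choice $U=x^{\theta/2-\rho}$, $V=x^{1-\theta+2\rho}$ could be replicated with Heath-Brown parameters. However, there are two genuine problems with your account. First, you attribute the term $yx^\varepsilon(q+y^2x^{k-2}|q\alpha-a|)^{-1/(2k)}$ to the Type I sums. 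This is backwards: the Type I estimate (Proposition \ref{prop: Type I}) produces the \emph{stronger} denominator $(q+yx^{k-1}|q\alpha-a|)^{1/k}$, and it is precisely the Cauchy--Schwarz step in the Type II treatment that halves the exponent to $1/(2k)$ and degrades $yx^{k-1}$ to $y^2x^{k-2}$. The misattribution matters because it obscures where the bilinear loss actually occurs and hence why the final bound has the shape it does.

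Second, and more seriously, everything that makes the theorem quantitative is deferred. The exponent $\rho_k(\theta)=\min\{\sigma_k(\theta-3/4)/8,\,\delta\}$ is not a heuristic bookkeeping constant: it arises from the explicit admissibility windows $M\ll y(y/x)^{\gamma/(\gamma-\sigma_k-1)}$, $M\ll yx^{-\gamma\rho/\sigma_k}$, $M^{2k}\ll yx^{k-1-2k\rho}$ for Type I and $x^{1/2}\leq M\ll y^{1-2\rho}$ for Type II, together with the choice $\gamma=(\theta-3/4)^{-1}$ and the verification that the Vaughan ranges $V$ and $[x^{1/2},UV]$ (plus the symmetric range $[V,x^{1/2}]$ after swapping variables) actually fall inside these windows when $\theta>3/4$. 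You acknowledge this as "delicate bookkeeping" but do not carry it out, and without it one cannot claim the specific saving $\sigma_k(\theta-3/4)/8$ or even the threshold $\theta>3/4$. Likewise, the technical heart of the Type II argument is absent: after differencing one must pass through a chain of Diophantine approximations --- for $(n_2^k-n_1^k)\alpha$, then $hd^k\alpha$, then $d^k\alpha$, finally identifying the approximant with $a/q$ itself --- using the divisor-sum estimate of Kawada--Wooley (inequality (\ref{eqn: sum with R(n,h)})) at each stage to control the accumulated $\gcd$ factors. A mean-value theorem "in the spirit of Vinogradov" is not the input here; the input is the pointwise short-interval Weyl bound of Lemma \ref{lemma: exponential sums in short intervals} applied to each inner sum, and the saving $\sigma_k=1/(2t_k)$ is the classical Weyl--Vinogradov exponent from Daemen's work, not an efficient-congruencing exponent. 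As written, your proposal is a correct map of the territory but does not constitute a proof of the stated bound.
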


\begin{remark}
  In \cite{huang2015strong}, we asserts the strong orthogonality between the M{\"o}bius function and nonlinear exponential functions in short intervals. More specifically, if $k \geq 3$ being fixed and $y\geq x^{1-1/4+\varepsilon}$, then for any $A>0$, we have
  \[
    \sum_{x< n \leq x+y} \mu(n) e\left( n^k \alpha \right) \ll y(\log y)^{-A}
  \]
  uniformly for $\alpha \in \mathbb{R}$.
\end{remark}

To prove Theorem \ref{thm: exponential sums}, we first use Vaughan's identity to divide this to two types of sums, and then estimate the exponential sums of type I and type II respectively (see \S \ref{sec: proof of Theoerem 1}). The exponential sums of type I will be estimated in \S \ref{sec: Type I}, and the other one will be treated in \S \ref{sec: Type II}. To do these we mainly follow the method of Kumchev \cite{kumchev2013weyl}, and combine with the results in Daemen \cite{daemen2010asymptotic} (see \S \ref{sec: auxiliary results}-\ref{sec: Type II}).

\medskip
In the last section, we apply the circle method to give an application of this exponential sum 
to Waring--Goldbach problem in short intervals.

A formal application of the circle method suggests that whenever $s$ and $k$ are natural numbers with $s\geq k+1$, then all large integers $n$ satisfying appropriate local conditions should be represented as the sum of $s$ $k$-th powers of prime numbers.
With this expectation in mind, consider a natural number $k$ and prime $p$, take $\tau = \tau(k,p)$ to be the integer with $p^\tau|k$ but $p^{\tau+1}\nmid k$, and the define $\eta = \eta(k,p)$ by putting $\eta(k,p) = \tau+2$, when $p=2$ and $\tau > 0$, and otherwise $\eta(k,p) = \tau +1$. We then define $R= R(k)$ by putting $R(k) = \prod p^\eta$, where the product is taken over primes $p$ with $(p-1)|k$.
Write $X = (n/s)^{1/k}$.
We say that the exponent $\Delta_{k,s}$ is \emph{admissible} when, provided that $\Delta$ is a positive number with $\Delta < \Delta_{k,s}$, then for all sufficiently large positive integers $n$ with $n \equiv s \pmod{R}$, the equation
\begin{equation} \label{eqn: equation}
  p_1^k + p_2^k + \cdots + p_s^k = n
\end{equation}
has a solution in prime numbers $p_j$ satisfying $|p_j - X|\leq X^{1-\Delta} \ (1\leq j \leq s)$. We refer the reader to \cite{wei2014sums} for more details.

Together the circle method used in \cite{wei2014sums} with our estimate of exponential sums in Theorem \ref{thm: exponential sums}, we show that there are larger admissible exponents $\Delta_{k,s}$ as soon as $s> 2t_k$.

\begin{theorem} \label{thm: sums of powers of almost equal primes}
  Let $s$ and $k$ be integers with $k\geq 3$ and $s >2 t_k$. Suppose that $\varepsilon>0$, that $n$ is a sufficiently large number satisfying $n \equiv s \pmod{R}$, and write $X = (n/s)^{1/k}$. Then the equation $n = p_1^k + p_2^k + \cdots + p_s^k$ has a solution in prime numbers $p_j$ with $|p_j - X| \leq X^{19/24+\varepsilon} \ (1\leq j \leq s)$.
\end{theorem}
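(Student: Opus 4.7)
The plan is to apply the Hardy--Littlewood circle method essentially in the manner of Wei--Wooley \cite{wei2014sums}, substituting Theorem \ref{thm: exponential sums} for their Weyl-type bound, which grants access to the wider admissible range $\theta > 3/4$. Fix $\theta = 19/24 + \eta$ for a small $\eta > 0$, set $y = X^\theta$, and choose $x$ so that $(x, x+y]$ is centered at $X$. Writing $S(\alpha) = S_k(x, y; \alpha)$, orthogonality gives
\[
\mathcal{R}(n) := \sum_{\substack{p_1, \ldots, p_s \text{ prime} \\ x < p_j \leq x+y \\ p_1^k + \cdots + p_s^k = n}} \prod_{j=1}^s \log p_j = \int_0^1 S(\alpha)^s e(-n\alpha) \, d\alpha,
\]
and it suffices to show $\mathcal{R}(n) > 0$. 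Partition $[0,1) = \mathfrak{M} \cup \mathfrak{m}$ using the parameters $P, Q$ from Theorem \ref{thm: exponential sums}, with $\delta > 0$ suitably small.

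On the major arcs, a standard argument invoking a Siegel--Walfisz-type theorem in short intervals produces
\[
\int_\mathfrak{M} S(\alpha)^s e(-n\alpha) \, d\alpha = \mathfrak{S}(n) J(n) + O\bigl( y^{s-1} X^{1-k} (\log X)^{-1} \bigr),
\]
where the singular series $\mathfrak{S}(n) \gg 1$ under the hypothesis $n \equiv s \pmod R$, and the singular integral satisfies $J(n) \asymp y^{s-1}/X^{k-1}$. This extracts the expected main term of order $y^{s-1}/X^{k-1}$.

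For the minor arcs, Theorem \ref{thm: exponential sums} furnishes the supremum bound $\sup_{\alpha \in \mathfrak{m}} |S(\alpha)| \ll y^{1-\rho+\varepsilon}$ for any $0 < \rho < \rho_k(\theta)$, once $\delta$ is chosen so that the second term in Theorem \ref{thm: exponential sums} is absorbed on $\mathfrak{m}$ by the first. Combined with the short-interval $2t_k$-th moment estimate
\[
\int_0^1 |S(\alpha)|^{2t_k} \, d\alpha \ll y^{2t_k - k + \varepsilon}
\]
derivable from Daemen \cite{daemen2010asymptotic} via a Vaughan-type decomposition to pass from the unweighted to the $\Lambda$-weighted sum, the standard $L^\infty \cdot L^{2t_k}$ bound yields
\[
\int_\mathfrak{m} |S(\alpha)|^s \, d\alpha \ll y^{s - k - (s-2t_k)\rho + \varepsilon}.
\]
The strict inequality $s > 2 t_k$ supplies the positive power saving from the supremum factor needed to render this negligible against the main term $y^{s-1}/X^{k-1}$; the value $\theta = 19/24$ arises from optimizing this comparison subject to the admissible range $\rho < \rho_k(\theta) \sim (\theta - 3/4)/(16 t_k)$.

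The principal obstacle is the tight calibration of parameters in the minor arc analysis. The saving $\rho_k(\theta)$ furnished by Theorem \ref{thm: exponential sums} is modest when $\theta$ is close to the lower limit $3/4$, so $\delta$ and $\rho$ must be chosen with some care to reach the threshold $\theta = 19/24$. The major arc treatment is routine and parallels \cite{wei2014sums} with essentially cosmetic modifications.
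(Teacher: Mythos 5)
Your overall architecture is the one the paper uses (circle method with the dissection of \S\ref{sec: introduction}, Wei--Wooley's major arc asymptotics, and a minor arc estimate of the form $\sup_{\mathfrak m}|f|^{\,s-2t_k}\cdot\int_0^1|f|^{2t_k}$, with Theorem~\ref{thm: exponential sums} supplying the supremum bound), but there is a genuine error in your minor arc bookkeeping. The short-interval Hua-type moment estimate is \emph{not} $\int_0^1|S(\alpha)|^{2t_k}\,d\alpha\ll y^{2t_k-k+\varepsilon}$; the correct bound (Proposition~\ref{prop: minor arcs, integral}, quoted from \cite[Proposition 2.2]{wei2014sums}) is $y^{2t_k-1}x^{1-k+\varepsilon}$, reflecting that the sums $n_1^k+\cdots+n_{t_k}^k$ range over an interval of length $\asymp x^{k-1}y$, not $y^{k}$. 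Your version is larger by the factor $(x/y)^{k-1}=x^{(1-\theta)(k-1)}$, and this loss is fatal: your final minor arc bound $y^{s-k-(s-2t_k)\rho+\varepsilon}$ exceeds the main term $y^{s-1}X^{1-k}$ by $x^{(1-\theta)(k-1)-\theta(s-2t_k)\rho}$, and since $\rho<\rho_k(\theta)\leq\sigma_k(\theta-3/4)/8=O(1/t_k)$ while $(1-\theta)(k-1)\geq 5/12$ at $\theta=19/24$, the exponent is positive for all $s$ up to a large multiple of $t_k$. So the claim that ``$s>2t_k$ supplies the power saving needed to render this negligible'' does not follow from what you wrote. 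With the correct moment bound the comparison is clean: the minor arc contribution is $\ll y^{s-1}x^{1-k}\,y^{-(s-2t_k)\rho}x^{\varepsilon}$, which beats the main term for every $s>2t_k$ and every fixed $\rho>0$.

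A second, related misstatement: the threshold $19/24$ does not come from ``optimizing the minor arc comparison.'' Theorem~\ref{thm: exponential sums} and the moment estimate handle the minor arcs for every $\theta>3/4$, with no delicate calibration required once the correct moment bound is in place. The constraint $\theta>19/24$ enters through the \emph{major} arcs: Proposition~\ref{prop: major arcs} requires $19/24<\theta<1$, a limitation inherited from the short-interval distribution of primes (Huxley's exponent $7/12$, whence $19/24=\tfrac12(1+\tfrac{7}{12})$). So the part of the argument you describe as ``routine with cosmetic modifications'' is precisely where the exponent in the theorem is determined. (Your use of the $\Lambda$-weighted sum $S_k(x,y;\alpha)$ in place of $f(\alpha)$ is harmless, as prime powers contribute negligibly.)
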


In \cite{wei2014sums}, Wei and Wooley gave the same exponent $\frac{19}{24}$ for the case $k=2$.
They remark that since $\frac{19}{24} = \frac{1}{2} \left(1+ \frac{7}{12}\right)$, this exponent is in some sense half way between the trivial exponent 1 and the exponent $\frac{7}{12}$ that, following the work of Huxley \cite{huxley1972difference}, represents the effective limit of our knowledge concerning the asymptotic distribution of prime numbers in short intervals.

\medskip
By the same argument in \cite[\S 9]{wei2014sums}, we obtain the following almost-all result. (The history of this kind of problem can be seen in \cite[\S 1]{wei2014sums}.)

\begin{theorem}\label{thm: exceptional sets}
  Let $s$ and $k$ be integers with $k\geq 3$ and $s > t_k$. Suppose that $\varepsilon>0$. Then for almost all positive integers $n$ with $n \equiv s \pmod{R}$, (and, in case $k=3$ and $s=7$, satisfying also $9\nmid n$), the equation $n = p_1^k + p_2^k + \cdots + p_s^k$ has a solution in prime numbers $p_j$ with $|p_j - X| \leq X^{19/24+\varepsilon} \ (1\leq j \leq s)$, where $X = (n/s)^{1/k}$.
\end{theorem}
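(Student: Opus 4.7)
The strategy is to run the circle method exactly as in \S 9 of Wei--Wooley \cite{wei2014sums}, substituting our minor arc estimate (Theorem \ref{thm: exponential sums}) — which is valid already for $\theta > 3/4$ — in place of the Weyl-type estimate used there (which was only valid for $\theta > 5/6$). Since $\theta = 19/24+\varepsilon/2 > 3/4$, Theorem \ref{thm: exponential sums} is available, and it is the only ingredient in \cite[\S 9]{wei2014sums} that requires a wider interval than ours. Setting $X = (n/s)^{1/k}$ and $y = X^{19/24+\varepsilon/2}$, introduce the generating function
\[
f(\alpha) = \sum_{X-y/2 < m \leq X+y/2} \Lambda(m)\, e(\alpha m^k),
\]
together with the Hardy--Littlewood dissection $[0,1) = \mathfrak{M} \cup \mathfrak{m}$ from \eqref{eqn: Q}--\eqref{eqn: Dirichlet's theorem on Diophantine approximations}, and write $R(n) = \int_0^1 f(\alpha)^s e(-\alpha n)\, d\alpha = R_{\mathfrak{M}}(n) + R_{\mathfrak{m}}(n)$.

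For the major arc piece I would follow Wei--Wooley verbatim: Siegel--Walfisz on each $\mathfrak{M}(q,a)$, followed by the standard reduction to the singular series $\mathfrak{S}(n)$ and the singular integral $J(n)$, yields
\[
R_{\mathfrak{M}}(n) \gg \mathfrak{S}(n) \cdot \frac{y^{s-1}}{X^{k-1}}, \qquad \mathfrak{S}(n) \gg 1,
\]
uniformly for $n \sim N$ subject to $n \equiv s \pmod{R}$ and, in the case $k=3,s=7$, to $9 \nmid n$ (these congruence conditions rule out the only vanishing of $\mathfrak{S}(n)$).

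For the minor arc piece the decisive step is Bessel's inequality,
\[
\sum_{N/2 < n \leq N} |R_{\mathfrak{m}}(n)|^2 \;\leq\; \int_{\mathfrak{m}} |f(\alpha)|^{2s}\, d\alpha \;\leq\; \Bigl(\sup_{\alpha \in \mathfrak{m}} |f(\alpha)|\Bigr)^{2(s-t_k)} \int_0^1 |f(\alpha)|^{2 t_k}\, d\alpha.
\]
Into the supremum I plug Theorem \ref{thm: exponential sums}, giving $\sup_{\mathfrak{m}} |f| \ll y^{1-\rho+\varepsilon}$ with $\rho = \rho_k(19/24+\varepsilon/2) > 0$; into the mean value I plug Daemen's \cite{daemen2010asymptotic} short-interval Hua-type bound, which yields $\int_0^1 |f|^{2t_k}\, d\alpha \ll y^{2t_k} X^{-k+\varepsilon}$ for $\theta > 3/4$. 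Because $s > t_k$, the product is $o\bigl(y^{2s-2} X^{-(k-2)}\bigr)$, which equals $o\bigl(N \cdot (y^{s-1}/X^{k-1})^2\bigr)$. A Chebyshev argument then shows that the exceptional set $\{n \in (N/2,N]: |R_{\mathfrak{m}}(n)| \geq \tfrac12 R_{\mathfrak{M}}(n)\}$ has density $o(1)$ in each dyadic range, and dyadic summation finishes the proof.

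The main obstacle is the accounting in the minor arc combination: with $\theta = 19/24$ one has $(1-\theta)(k-2) > 0$, so the sup bound alone never wins against the size of the main term, and the desired cancellation comes entirely from having the \emph{sharp} diagonal exponent $y^{2t_k} X^{-k}$ in the $2t_k$-th moment (rather than the cheap bound $y^{2t_k - k}$). Granting this sharp Daemen input, however, \emph{any} positive $\rho$ and any $s > t_k$ suffice, which is precisely why the small savings in Theorem \ref{thm: exponential sums} is enough and why the exponent $19/24$ carries over from the representation-of-all-integers result to the almost-all setting.
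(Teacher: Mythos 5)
Your overall strategy is exactly the paper's: the paper's ``proof'' of Theorem \ref{thm: exceptional sets} is a one-line deferral to \cite[\S 9]{wei2014sums} with Theorem \ref{thm: exponential sums} substituted for the Weyl-type input, and your Bessel-inequality/Chebyshev scheme with the splitting $|f|^{2s}\leq(\sup_{\mathfrak{m}}|f|)^{2(s-t_k)}|f|^{2t_k}$ is precisely that argument. You also correctly identify that any positive $\rho$ together with $s>t_k$ suffices, and you remember the $9\nmid n$ condition for $(k,s)=(3,7)$.

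However, two of your intermediate quantitative claims are wrong, and your accounting only closes because the two errors cancel. First, the mean value bound $\int_0^1|f(\alpha)|^{2t_k}\,d\alpha\ll y^{2t_k}X^{-k+\varepsilon}$ is false: writing $r(m)$ for the number of representations of $m$ as a sum of $t_k$ $k$th powers from the interval, Cauchy--Schwarz gives $\int_0^1|f|^{2t_k}\,d\alpha\geq\bigl(\sum_m r(m)\bigr)^2/\#\{m:r(m)>0\}\gg y^{2t_k-1}X^{1-k}(\log X)^{-C}$, which exceeds your claimed bound by a factor $X/y\to\infty$. The correct and available input is Proposition \ref{prop: minor arcs, integral} with $s=2t_k$, namely $\ll y^{2t_k-1}x^{1-k+\varepsilon}$. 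Second, a single generating function $f$ (with $X$ fixed) produces a main term $R_{\mathfrak{M}}(n)\gg y^{s-1}X^{1-k}$ only for $n$ in an interval of length $\asymp X^{k-1}y$, not uniformly for all $N$ integers $n\sim N$; one must dissect $(N/2,N]$ into $\asymp X/y$ such intervals and run Bessel's inequality on each, so the target is $o\bigl(X^{k-1}y\cdot(y^{s-1}X^{1-k})^2\bigr)$ per interval rather than $o\bigl(N\cdot(y^{s-1}X^{1-k})^2\bigr)$. These two slips differ from the truth by the reciprocal factors $y/X$ and $X/y$, so your final inequality is equivalent to the correct one: with the corrected inputs the requirement reduces to $y^{-2\rho(s-t_k)+\varepsilon}=o(1)$, which indeed needs only $\rho>0$ and $s>t_k$, as you say. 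Both points should be fixed before the argument can be regarded as complete, but neither affects the conclusion or the exponent $19/24$.
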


%
%

\noindent
{\bf Notation.}
Throughout the paper, the letter $\varepsilon$ denotes a sufficiently small positive real number which may be different at each occurrence. For example, we may write $x^\varepsilon \ll y^\varepsilon$.
Any statement in which $\varepsilon$ occurs holds for each positive $\varepsilon$, and any implied constant in such a statement is allowed to depend on $\varepsilon$.
The letter $p$, with or without subscripts, is reserved for prime numbers.
In addition, as usual, $e(z)$ denotes $e^{2\pi iz}$.
We write $(a, b) = {\rm gcd}(a, b$), and we use $m \sim M$ as an abbreviation for the condition $M < m \leq 2M$.

\section{Auxiliary results} \label{sec: auxiliary results}

The following lemma is useful to give an estimate for exponential sums of type I and type II which is an improvement of \cite[Lemma 2.2]{kumchev2013weyl}.

\begin{lemma} \label{lemma: exponential sums in short intervals}
  Let $k\geq 3$ be an integer and $\gamma\geq 3$ be a real number. Let $0<\rho\leq \sigma_k/\gamma$, where $\sigma_k = 1/(2t_k)$.
  Suppose that $y\leq x,$ and $y \geq x^{\frac{\gamma}{2\gamma-\sigma_k-1}}$. Then either    
  \begin{equation} \label{eqn: exponential sums in short intervals, minor arcs}
    \sum_{x < n \leq x+y} e\left(n^k \alpha\right) \ll y^{1-\rho +\varepsilon},
  \end{equation}
  or there exist integers $a$ and $q$ such that
  \begin{equation} \label{eqn: exponential sums in short intervals, conditions}
    1\leq q \leq y^{k\rho},\quad (a,q)=1,\quad |q\alpha - a| \leq x^{1-k} y^{k\rho -1},
  \end{equation}
  and
  \begin{equation} \label{eqn: exponential sums in short intervals, major arcs}
    \sum_{x < n \leq x+y} e\left(n^k \alpha\right)  \ll   y^{1-\rho +\varepsilon} + \frac{y}{(q+yx^{k-1}|q\alpha - a|)^{1/k}}.
  \end{equation}
\end{lemma}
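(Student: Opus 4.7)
The overall plan is to follow the template of Kumchev \cite[Lemma 2.2]{kumchev2013weyl}, but to use Daemen's short-interval mean value estimates \cite{daemen2010asymptotic} in place of the full-length Vinogradov moment inputs that appear there. This substitution is what permits the weaker hypothesis $y \geq x^{\gamma/(2\gamma-\sigma_k-1)}$ in the statement.

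I would begin by applying Dirichlet's theorem with parameter $Q_0 = y^{1-k\rho} x^{k-1}$ to obtain coprime integers $a, q$ with $1 \leq q \leq Q_0$ and $|q\alpha - a| \leq Q_0^{-1} = y^{k\rho - 1} x^{1-k}$. If $q \leq y^{k\rho}$, then \eqref{eqn: exponential sums in short intervals, conditions} holds automatically, and it remains only to establish the pointwise estimate. Writing $\alpha = a/q + \beta$ and decomposing the summation over residue classes modulo $q$, one encounters inner sums of the shape $\sum_m e(\beta (qm+b)^k)$ on short intervals in $m$; a standard combination of partial summation, the second derivative test, and Gauss sum cancellation produces the contribution $y(q + y x^{k-1}|q\alpha - a|)^{-1/k}$ appearing in \eqref{eqn: exponential sums in short intervals, major arcs}.

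If instead $q > y^{k\rho}$, the task is to prove \eqref{eqn: exponential sums in short intervals, minor arcs}. The plan is to apply the Weyl--van der Corput inequality $j$ times, reducing the sum to a mean value of exponential sums attached to a polynomial $\Phi_{\mathbf{h}}$ of degree $k-j$ whose leading coefficient is a nonzero multiple of $\alpha h_1 \cdots h_j$. After appealing to H\"older's inequality, one reduces the required bound to a short-interval mean value estimate of Vinogradov type, at which point I would invoke Daemen's result. Daemen's bound has the Vinogradov-type exponent $\sigma_k = 1/(2 t_k)$ built in and is nontrivial precisely in the range where $y \geq x^{\gamma/(2\gamma-\sigma_k-1)}$; combining it with the hypothesis $q > y^{k\rho}$ (which excludes the ``trivial'' rational contributions in the mean value) yields the saving $y^{-\rho + \varepsilon}$, as required.

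The main technical obstacle will be to calibrate the number of differencing iterations $j$ against Daemen's available mean value exponent so that the above argument produces exactly the claimed saving $\rho \leq \sigma_k/\gamma$, and to verify that the threshold $y \geq x^{\gamma/(2\gamma-\sigma_k-1)}$ emerges precisely as the regime in which Daemen's estimate is both valid and nontrivial at the resulting scale. A further piece of bookkeeping is to check that the two ranges $q \leq y^{k\rho}$ and $q > y^{k\rho}$ mesh without gap under the choice of Dirichlet parameter $Q_0$: this requires $Q_0 \geq y^{k\rho}$, i.e., $x^{k-1} \geq y^{2k\rho - 1}$, which holds comfortably under the hypotheses imposed on $y$ and $\rho$.
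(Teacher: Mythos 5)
Your overall architecture has a genuine gap in the case division. You split at $q \le y^{k\rho}$ and propose to prove the pure Weyl bound \eqref{eqn: exponential sums in short intervals, minor arcs} for every $q > y^{k\rho}$ by differencing plus a Vinogradov-type mean value. But the saving available from that machinery is only $q^{-\sigma_k+\varepsilon}$ with $\sigma_k = 1/(2t_k)$, so for $q$ just above $y^{k\rho}$ you obtain at best $y^{1-k\rho\sigma_k+\varepsilon}$; since $k\sigma_k = 1/(2(k-1)) < 1$, this falls short of $y^{1-\rho}$. Throughout the intermediate range $y^{k\rho} < q \le y^{2t_k\rho}$ the only way to reach $y^{1-\rho}$ is via the Gauss-sum saving $q^{-1/k}$ coming from the major-arc expansion, i.e., exactly the analysis you reserve for $q \le y^{k\rho}$. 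The paper's proof avoids this by splitting at the much larger threshold $P_0 = y^{1/\gamma}$ (which is at least $y^{\rho/\sigma_k} = y^{2t_k\rho}$ because $\rho \le \sigma_k/\gamma$), with the Dirichlet parameter $Q_0 = x^{k-2}y^2/P_0$ matched to the quantity $q + x^{k-2}y^2|q\alpha-a|$ that actually governs Daemen's estimates: for $q>P_0$ the Vinogradov-method bound gives $yP_0^{-\sigma_k+\varepsilon}\ll y^{1-\rho+\varepsilon}$; for $q\le P_0$ Daemen's expansion gives the main term of \eqref{eqn: exponential sums in short intervals, major arcs} plus an error $\Delta \ll P_0^{1+\varepsilon}x/y$; and only at the very end is the stated dichotomy extracted by observing that when \eqref{eqn: exponential sums in short intervals, conditions} fails, the main term is itself $\ll y^{1-\rho}$. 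Your plan never performs this last reduction, and as written it cannot, because your minor-arc tool is too weak precisely where it is first invoked.

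A second, related problem is where the hypothesis $y \ge x^{\gamma/(2\gamma-\sigma_k-1)}$ enters. You attribute it to the nontriviality of Daemen's mean value on the minor arcs, but in the paper it is used only to control the major-arc error term: the inequality $P_0^{1+\varepsilon}x/y \ll y^{1-\sigma_k/\gamma+\varepsilon} \le y^{1-\rho+\varepsilon}$ is equivalent to that hypothesis. Your own error analysis ($O(1)$ per residue class, total $O(q)$) is harmless only because you restricted to $q \le y^{k\rho}$; once the threshold is raised to $P_0$, as it must be, the error analysis is exactly that of Daemen's equations (5.1)--(5.5) and Section 6 of \cite{daemen2010asymptotic}, and the hypothesis on $y$ is what makes it close. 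Finally, iterated Weyl--van der Corput differencing yields the exponent $2^{1-k}$, not $1/(2t_k)$; the exponent $\sigma_k$ comes from applying Vinogradov's method to the full degree-$k$ polynomial obtained from the shift $n = u+m$ (Daemen's eq. (3.5), Wei--Wooley eq. (4.23)), so even the genuinely minor-arc half of your argument needs to be rerouted through that device rather than through differencing.
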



\begin{proof}
  Take
  \begin{equation*}
    P_0 = y^{1/\gamma} \quad  \textrm{and} \quad Q_0 = x^{k-2}y^2/P_0.
  \end{equation*}
  By Dirichlet's theorem on Diophantine approximation, there exists integers $a$ and $q$ with
  \begin{equation} \label{eqn: conditions of a and q}
    1\leq q\leq Q_0, \quad  (a,q)=1, \quad  |q\alpha-a|\leq 1/Q_0.
  \end{equation}
  When $q>P_0$, we rewrite the sum on the left of (\ref{eqn: exponential sums in short intervals, minor arcs}) as
  \begin{equation*}
    \sum_{1\leq n\leq z} e(\alpha_k n^k + \alpha_{k-1} n^{k-1} + \cdots + \alpha_0),
  \end{equation*}
  where $z\leq y$ and $\alpha_j = \binom{k}{j} \alpha u^{k-j}$, with $u$ a fixed integer.
  Hence, it follows from the argument underlying the proof of \cite[eq. (3.5)]{daemen2010asymptotic} and \cite[eq. (4.23)]{wei2014sums} that
  \begin{equation}
    \sum_{x < n \leq x+y} e\left(n^k \alpha\right) \ll y P_0^{-1/(2t_k)+\varepsilon} \ll y^{1-\rho +\varepsilon}.
  \end{equation}
  When $q\leq P_0$, from \cite[eq. (5.1)-(5.5) and \S 6]{daemen2010asymptotic}, we deduce
  \begin{equation*}
    \sum_{1\leq n\leq y} e(\alpha_k n^k + \alpha_{k-1} n^{k-1} + \cdots + \alpha_0) \ll \frac{y}{(q+yx^{k-1}|q\alpha - a|)^{1/k}} + \Delta,
  \end{equation*}
  where
  \begin{equation*}
    \Delta \ll P_0^{1/2+\varepsilon} \left( 1+ \frac{P_0 x^k}{x^{k-2} y^2} \right)^{1/2} \ll P_0^{1+\varepsilon} x/y \ll y^{1-\rho +\varepsilon},
  \end{equation*}
  provided that $y \geq x^{\frac{\gamma}{2\gamma-\sigma_k-1}}$. Thus, at least one of (\ref{eqn: exponential sums in short intervals, minor arcs}) and (\ref{eqn: exponential sums in short intervals, major arcs}) holds.
  The lemma follows on noting that when conditions (\ref{eqn: exponential sums in short intervals, conditions}) fail, inequality (\ref{eqn: exponential sums in short intervals, minor arcs}) follows from (\ref{eqn: exponential sums in short intervals, major arcs}).
\end{proof}


The next lemma gives some inequalities which will be used in the following sections.

\begin{lemma} \label{lemma: some inequalities}
  We have
  \begin{equation}
    \sum_{n\sim N} (r,n^k)^{1/k} \leq  N \tau(r),
  \end{equation}
  where $\tau(r)$ is the divisor function; and for any $\varepsilon > 0$, we have
  \begin{equation} \label{eqn: sum with R(n,h)}
    \sum_{\substack{n\sim N \\ (n,h)=1}} (r,R(n,h))^{1/k} \ll N r^{\varepsilon} + r^{1/k+\varepsilon},
  \end{equation}
  where $R(n,h) = \left((n+h)^k - n^k \right)/h$.
\end{lemma}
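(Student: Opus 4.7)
My plan is to treat the two inequalities separately.

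For the first, I would reduce to the pointwise bound $(r,n^k)^{1/k}\le (r,n)$ via a $p$-adic comparison: at each prime $p$,
\[
v_p\bigl((r,n^k)^{1/k}\bigr)=\tfrac{1}{k}\min\{v_p(r),\,kv_p(n)\}\le\min\{v_p(r),v_p(n)\}=v_p\bigl((r,n)\bigr).
\]
Then the familiar estimate $\sum_{n\le M}(r,n)=\sum_{d\mid r}\phi(d)\lfloor M/d\rfloor\le M\tau(r)$, taken at $M=2N$ and $M=N$ and subtracted, yields $\sum_{n\sim N}(r,n^k)^{1/k}\le N\tau(r)$.

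For the second inequality, my plan is to expand $R(n,h)=kn^{k-1}+\binom{k}{2}hn^{k-2}+\cdots+h^{k-1}$, group the sum according to $d=(r,R(n,h))$, and reduce to a polynomial congruence count. Writing $\rho(d)=\#\{n\bmod d:R(n,h)\equiv 0\pmod d\}$, one has
\[
\sum_{\substack{n\sim N\\(n,h)=1}}(r,R(n,h))^{1/k}\le\sum_{d\mid r}d^{1/k}\#\{n\sim N:d\mid R(n,h)\}\le\sum_{d\mid r}d^{1/k}\Bigl(\tfrac{N\rho(d)}{d}+\rho(d)\Bigr).
\]
Assuming the bound $\rho(d)\ll_k d^{\varepsilon}$, the first summand contributes $\ll N\sum_{d\mid r}d^{1/k-1+\varepsilon}\ll Nr^{\varepsilon}$ (since $1/k\le 1$ keeps the exponent nonpositive) and the second contributes $\ll r^{1/k+\varepsilon}$, which is exactly the desired estimate.

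The technical core is thus the bound $\rho(d)\ll_k d^{\varepsilon}$. Since $\rho$ is multiplicative by the Chinese remainder theorem, it suffices to control $\rho(p^a)$. Using $hR(n,h)=(n+h)^k-n^k$, whenever $(p,h)=1$ the congruence $R(n,h)\equiv0\pmod{p^a}$ forces $(n,p)=1$ and becomes $w^k\equiv1\pmod{p^a}$ with $w=1+h/n$, $w\not\equiv1\pmod p$. Differentiating this identity shows that $R$ and $R'$ share no root modulo $p$ when $(p,hk)=1$, so Hensel's lemma gives $\rho(p^a)=\rho(p)\le k-1$ uniformly in $a$. For $p\mid k$ with $(p,h)=1$, the $k$-torsion of $(\mathbb{Z}/p^a)^{\ast}$ restricted to elements with $w\not\equiv1\pmod p$ is still $O_k(1)$, uniformly in $a$ (by a Lucas/Kummer analysis of the binomial coefficients $\binom{k}{j}$). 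For $p\mid h$, the expansion of $R(n,h)\pmod{p^{\min(a,v_p(h))}}$ combined with $(n,h)=1$ forces $p^{\min(a,v_p(h))}\mid k$ and again leaves only $O_k(1)$ local possibilities. Multiplying these local estimates yields $\rho(d)\le C_k^{\omega(d)}\ll_k d^{\varepsilon}$.

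The hardest point is securing $\rho(p^a)=O_k(1)$ \emph{uniformly in the exponent} $a$: a direct generic polynomial root-counting estimate only gives $\rho(p^a)\ll p^{a(1-1/k)}$, whence $\rho(d)\ll d^{1-1/k}$, which is too weak to produce the sharp $r^{1/k+\varepsilon}$ in the conclusion. It is the reformulation in terms of $k$-th roots of unity in $(\mathbb{Z}/p^a)^{\ast}$, together with the restriction $w\not\equiv 1\pmod p$, that collapses the local count to a quantity bounded independently of $a$; this is the technical subtlety on which the argument turns.
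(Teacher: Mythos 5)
Your argument is correct, and for the second inequality it is substantially more than the paper provides. For the first bound the paper argues exactly as you do: $(r,n^k)^{1/k}\le (n,r)$, then $\sum_{n\sim N}(n,r)\le\sum_{d\mid r}d\,\#\{n\sim N:d\mid n\}\le N\tau(r)$; your $p$-adic justification of the pointwise step is the standard one. For the second bound the paper gives no proof at all --- it simply cites inequality (3.11) of Kawada and Wooley --- whereas you reconstruct the estimate from scratch. Your reduction to the local root count $\rho(d)$ and the key observation that, for $p\nmid h$, the congruence $d\mid R(n,h)$ is equivalent to $w^k\equiv 1$ with $w=1+h n^{-1}\not\equiv 1\pmod p$ (so that the number of admissible $w$, hence of residue classes for $n$, is $O_k(1)$ \emph{uniformly in the prime power}) is exactly the mechanism that makes the $r^{1/k+\varepsilon}$ term come out right; you correctly identify that naive root counting mod $p^a$ would be fatally lossy. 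Two small points of hygiene: (i) your $\rho(d)$ should be defined with the restriction $(n,(h,d))=1$ built in, since that is the count you actually bound in the $p\mid h$ analysis and it is the count relevant to the sum (as written, the unrestricted $\rho(p^a)$ for $p\mid h$ need not be small); (ii) the roots-of-unity argument already gives $\rho(p^a)\le\gcd(k,\varphi(p^a))\le 2k$ for every $p\nmid h$, so the separate Hensel treatment of $p\nmid hk$ and the Lucas/Kummer discussion for $p\mid k$ are dispensable. The $p\mid h$ and $p=2$ cases are only sketched, but a lifting-the-exponent computation does confirm $O_k(1)$ there, so the outline is sound.
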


\begin{proof}
  We have
  \begin{equation*}
    \sum_{n\sim N} (r,n^k)^{1/k} \leq \sum_{n\sim N} (n,r) \leq \sum_{d|r} d \sum_{\substack{n\sim N \\ d|n}} 1 \leq N \tau(r).
  \end{equation*}
  To prove (\ref{eqn: sum with R(n,h)}), see the inequality (3.11) in Kawada and Wooley \cite{kawada2001waring}.
\end{proof}

\section{Type I estimate} \label{sec: Type I}

The following proposition treats the exponential sums of type I
which is an improvement of \cite[Lemma 8]{huang2015exponential}

\begin{proposition} \label{prop: Type I}
  Let $k\geq 3$ be an integer and $\gamma\geq 3$ be a real number.
  Let $0 < \rho < \min\{\sigma_k /(2\gamma),\delta\}$, with $\sigma_k = 1/(2t_k)$. 
  Suppose that $\alpha$ is real that there exist integers $a$ and $q$ such that
  (\ref{eqn: Dirichlet's theorem on Diophantine approximations}) holds with $Q$ given
  by (\ref{eqn: Q}). Let $\xi(m) \ll m^\varepsilon$, and define
  \begin{equation*}
    \mathcal{T}_1 = \sum_{m\sim M}  \xi(m) \sum_{x<mn\leq x+y} e\left((mn)^k \alpha\right).
  \end{equation*}
  Then
  \begin{equation*}
    \mathcal{T}_1 \ll y^{1-\rho +\varepsilon} + \frac{y x^\varepsilon}{(q + y x^{k-1} |q\alpha - a|)^{1/k}},
  \end{equation*}
  provided that
  \begin{equation} \label{eqn: conditions of M. type I}
    M \ll y \left( \frac{y}{x} \right)^{\frac{\gamma}{\gamma-\sigma_k-1}}, \quad
    M \ll y x^{-\gamma\rho/\sigma_k}, \quad
    M^{2k} \ll y x^{k-1-2k\rho}.
  \end{equation}
\end{proposition}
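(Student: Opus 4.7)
The plan is to apply Lemma \ref{lemma: exponential sums in short intervals} to the inner sum over $n$ for each fixed $m \sim M$, treating it as an exponential sum in a short interval of length $y/m$ around $x/m$ with frequency $m^k \alpha$. The first hypothesis in (\ref{eqn: conditions of M. type I}) rearranges to $y/m \geq (x/m)^{\gamma/(2\gamma - \sigma_k - 1)}$, so Lemma \ref{lemma: exponential sums in short intervals} applies for every $m \sim M$. I would invoke it with its internal parameter equal to $\sigma_k/\gamma$ (the largest admissible value), obtaining for each $m$ either the minor arc bound $(y/m)^{1 - \sigma_k/\gamma + \varepsilon}$, or an approximation $(b_m, r_m)$ with $(b_m, r_m) = 1$, $1 \leq r_m \leq (y/m)^{k\sigma_k/\gamma}$, and $|r_m m^k\alpha - b_m| \leq (x/m)^{1-k}(y/m)^{k\sigma_k/\gamma - 1}$, from which the inner sum is bounded by
\[
(y/m)^{1 - \sigma_k/\gamma + \varepsilon} + \frac{y/m}{\left(r_m + (y/m)(x/m)^{k-1}|r_m m^k\alpha - b_m|\right)^{1/k}}.
\]

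For the minor arc contribution, weighting by $\xi(m) \ll m^\varepsilon$ and summing yields
\[
\sum_{m \sim M} m^\varepsilon (y/m)^{1 - \sigma_k/\gamma + \varepsilon} \ll y^{1 - \sigma_k/\gamma + \varepsilon} M^{\sigma_k/\gamma}.
\]
The second hypothesis $M \ll y x^{-\gamma\rho/\sigma_k}$ of (\ref{eqn: conditions of M. type I}), combined with $y \leq x$, then converts this into $\ll y^{1 + \varepsilon} x^{-\rho} \ll y^{1 - \rho + \varepsilon}$, matching the first term of the claimed bound.

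For the major arc contribution, the crucial step is to argue that whenever case (b) of Lemma \ref{lemma: exponential sums in short intervals} applies for some $m$, the pair $(b_m, r_m)$ must coincide with the canonical approximation $(\tilde{b}_m, \tilde{r}_m) = (am^k/(q, m^k),\, q/(q, m^k))$ induced by $(a, q)$. I would verify that $(\tilde{b}_m, \tilde{r}_m)$ also satisfies the Dirichlet-type constraints for $m^k\alpha$ provided by Lemma \ref{lemma: exponential sums in short intervals}, the third hypothesis $M^{2k} \ll y x^{k-1-2k\rho}$ supplying the key inequality, and then invoke uniqueness of rational approximations to force the identification. A short calculation gives
\[
r_m + (y/m)(x/m)^{k-1}|r_m m^k\alpha - b_m| = (q, m^k)^{-1}\bigl(q + y x^{k-1}|q\alpha - a|\bigr),
\]
so the major arc contribution, summed over $m$, is bounded by
\[
\frac{y}{(q + y x^{k-1}|q\alpha - a|)^{1/k}} \sum_{m \sim M} m^{\varepsilon - 1}(q, m^k)^{1/k}.
\]
By Lemma \ref{lemma: some inequalities} the remaining sum is at most $M^{\varepsilon - 1} \cdot M\tau(q) \ll x^\varepsilon$, which combines with the minor arc bound to complete the proof.

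The main obstacle is the uniqueness step in the major arc analysis: one must verify that both $(b_m, r_m)$ and $(\tilde{b}_m, \tilde{r}_m)$ lie in a common Dirichlet range for $m^k\alpha$ that is narrow enough to force the two approximations to agree. The third hypothesis $M^{2k} \ll y x^{k-1-2k\rho}$ in (\ref{eqn: conditions of M. type I}) is precisely what makes this comparison possible; the remaining arithmetic, while somewhat intricate, is routine.
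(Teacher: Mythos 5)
Your setup (apply Lemma \ref{lemma: exponential sums in short intervals} to the inner sum for each $m$, split into a minor-arc and a major-arc contribution, and use the first two conditions of (\ref{eqn: conditions of M. type I}) for the minor-arc term) matches the paper, and your minor-arc computation is fine. The gap is in the major-arc step. You claim that whenever $m$ falls into case (b) of Lemma \ref{lemma: exponential sums in short intervals}, the resulting approximation $(b_m,r_m)$ to $m^k\alpha$ must equal $\bigl(am^k/(q,m^k),\,q/(q,m^k)\bigr)$, the one induced by the pair $(a,q)$ of (\ref{eqn: Dirichlet's theorem on Diophantine approximations}). But the proposition only assumes $q\leq Q=x^{k-2}y^2P^{-1}$, so $\tilde r_m=q/(q,m^k)$ can be of size up to $Q/(2M)^k$, which vastly exceeds the range $r_m\leq (y/m)^{k\sigma_k/\gamma}$ supplied by the lemma. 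The standard uniqueness criterion requires $\tilde r_m\,|r_m m^k\alpha-b_m|+r_m\,|\tilde r_m m^k\alpha-\tilde b_m|<1$, and the first term alone is of order $\bigl(q/(q,m^k)\bigr)\,X^{1-k}Y^{k\nu-1}$, which is not $<1$ when $q$ is large (e.g.\ $k=3$, $M\asymp y$, $y$ close to $x$ gives roughly $y^2/(xP)\gg 1$). The condition $M^{2k}\ll yx^{k-1-2k\rho}$ does not rescue this comparison; in the paper it is used for a different, much tamer one.

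What is missing is an intermediate step. The paper applies Dirichlet's theorem afresh to $\alpha$ itself to produce a pair $(b,r)$ with $1\leq r\leq x^{-k\rho}YX^{k-1}$ and $|r\alpha-b|\leq x^{k\rho}Y^{-1}X^{1-k}$ — a box calibrated so that $|b_1r-bm^kr_1|<1$, which is exactly where $M^{2k}\ll yx^{k-1-2k\rho}$ enters — and concludes $r_1=r/(r,m^k)$, giving $T_1(\alpha)\ll yx^{\varepsilon}(r+yx^{k-1}|r\alpha-b|)^{-1/k}$. Only then does one compare $(b,r)$ with $(a,q)$, and only in the case $r\leq x^{k\rho}$ and $|r\alpha-b|\leq y^{-1}x^{1-k+k\rho}$ can one deduce $|ra-bq|<1$ (using $q\leq Q$, $|q\alpha-a|\leq Q^{-1}$ and $\rho<\delta$) and hence $(b,r)=(a,q)$; in the complementary cases the denominator $r+yx^{k-1}|r\alpha-b|$ is already $\geq x^{k\rho}$, and the contribution is absorbed into $y^{1-\rho+\varepsilon}$. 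Your final arithmetic (the identity $r_m+ (y/m)(x/m)^{k-1}|r_m m^k\alpha-b_m|=(q,m^k)^{-1}(q+yx^{k-1}|q\alpha-a|)$ and the appeal to Lemma \ref{lemma: some inequalities}) is correct once one is legitimately in that final case, but as written your argument asserts it for all $\alpha$ satisfying (\ref{eqn: Dirichlet's theorem on Diophantine approximations}), which is false.
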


\begin{proof}
  Set \[  S_m = \sum_{X < n \leq X+Y} e\left( m^k n^k \alpha \right), \]
  where $X = x/m, Y =y/m$, with $m\sim M$.
  Define $\nu$ by $Y^\nu = x^\rho L^{-1}$. Note that, by (\ref{eqn: conditions of M. type I}), we have
  $$ \nu < \sigma_k/\gamma. $$
  We denote by $\mathcal{M}$ the set of integers $m \sim M$, for which there exist integers $b_1$ and $r_1$ with
  \begin{equation} \label{eqn: conditions of r_1 and b_1}
    1 \leq r_1 \leq Y^{k\nu},\quad (b_1,r_1)=1,\quad  |r_1 m^k \alpha - b_1| \leq X^{1-k} Y^{k\nu -1}.
  \end{equation}
  We apply Lemma \ref{lemma: exponential sums in short intervals} to the summation over $n$ and get
  \begin{equation*}
    S_m \ll  Y^{1-\nu+\varepsilon} +  \frac{Y}{(r_1+ YX^{k-1}|r_1 m^k\alpha- b_1|)^{1/k}},
  \end{equation*}
  for $m \in \mathcal{M}$.
  So
  \begin{eqnarray*}
    \mathcal{T}_1 &\ll&  \sum_{m\sim M} |\xi(m)| Y^{1-\nu+\varepsilon}  + \sum_{m\in \mathcal{M}} \frac{|\xi(m)| Y}{(r_1+ YX^{k-1}|r_1 m^k\alpha- b_1|)^{1/k}}.
  \end{eqnarray*}
  Then, we have
  \[
    \mathcal{T}_1 \ll y^{1-\rho +\varepsilon} + T_1(\alpha),
  \]
  where
  \[
    T_1(\alpha) = \sum_{m\in \mathcal{M}} \frac{|\xi(m)| Y}{(r_1+ YX^{k-1}|r_1 m^k\alpha- b_1|)^{1/k}}.
  \]
  We apply Dirichlet's theorem on Diophantine approximation to find integers $b$ and $r$ with
  \begin{equation} \label{eqn: conditions of r and b}
    1\leq r \leq x^{-k\rho} Y X^{k-1}, \quad (b,r)=1, \quad  |r\alpha -b| \leq x^{k\rho} Y^{-1} X^{1-k}.
  \end{equation}
  By (\ref{eqn: conditions of M. type I}), (\ref{eqn: conditions of r_1 and b_1}) and (\ref{eqn: conditions of r and b}), we have
  \begin{eqnarray*}
    |b_1 r - bm^k r_1| &=& |r(b_1-r_1m^k\alpha) + r_1 m^k (r\alpha - b)| \\
     &\leq& x^{-k\rho} Y X^{k-1}  X^{1-k} Y^{k\nu -1} + Y^{k\nu} (2M)^k x^{k\rho} Y^{-1} X^{1-k} \\
     &\ll& L^{-k} +  M^{2k} L^{-k}  x^{2k\rho-k+1} y^{-1} \ll L^{-k} < 1,
  \end{eqnarray*}
  whence
  \[
    \frac{b_1}{r_1} = \frac{m^k b}{r}, \quad  r_1 = \frac{r}{(r,m^k)}.
  \]
  Thus, by Lemma \ref{lemma: some inequalities}, we have
  \begin{eqnarray*}
    T_1(\alpha) &\leq&  \sum_{m\in \mathcal{M}} \frac{|\xi(m)| y M^{-1} r_1^{-1/k}}{(1+ YX^{k-1}|m^k\alpha- m^k b/r|)^{1/k}} \\
     &\ll&  \frac{y M^{-1+\varepsilon}}{(1+ y x^{k-1} |\alpha- b/r|)^{1/k}}  \sum_{m\sim M} \left(\frac{r}{(r,m^k)}\right)^{-1/k} \\
     &\ll&  \frac{y x^\varepsilon}{(r + yx^{k-1} |r \alpha- b|)^{1/k}}.
  \end{eqnarray*}
  Recall that $b$ and $r$ satisfy the conditions (\ref{eqn: conditions of r and b}). We now consider three cases depending on the size of $r$ and $|r\alpha -b|$.
  \begin{description}
    \item[\it Case 1] If $r > x^{k\rho} $, then $T_1(\alpha) \ll y^{1-\rho+\varepsilon}$;
    \item[\it Case 2] If $r \leq  x^{k\rho}$ and $|r\alpha -b| > y^{-1} x^{1-k} x^{k\rho}$, then $T_1(\alpha) \ll y^{1-\rho+\varepsilon}$;
    \item[\it Case 3] If $r \leq  x^{k\rho}$ and $|r\alpha -b| \leq  y^{-1} x^{1-k} x^{k\rho}$, we have
        \begin{eqnarray*}
          |ra-bq| &=& |r(a-q\alpha) + q(r\alpha-b)| \\
            &\leq& x^{k\rho}\frac{1}{Q} + Q y^{-1} x^{1-k} x^{k\rho} \\
            &\leq&  \frac{x^{k\rho}P}{x^{k-2}y^2} + \frac{yx^{k\rho}}{xP}.
        \end{eqnarray*}
        Since $\rho < \delta$, we have $|ra-bq|<1$, hence
        $$ a=b, \quad q=r.$$
        Then
        $$
          T_1(\alpha) \ll  \frac{y x^\varepsilon}{(q + yx^{k-1} |q \alpha- a|)^{1/k}}.
        $$
  \end{description}
  So we prove
  \begin{equation*}
    \mathcal{T}_1 \ll y^{1-\rho +\varepsilon} + \frac{y x^\varepsilon}{(q + y x^{k-1} |q\alpha - a|)^{1/k}}.
  \end{equation*}
\end{proof}

\begin{remark}
  Let
  \begin{equation} \label{eqn: Type I with log}
    \mathcal{T}_1^{*} = \sum_{m\sim M} \xi(m) \sum_{x<mn\leq x+y}  e\left((mn)^k \alpha\right) \log n.
  \end{equation}
  Under the condition of Proposition \ref{prop: Type I} we have
  \[
    \mathcal{T}_1^{*} \ll y^{1-\rho +\varepsilon} + \frac{y x^\varepsilon}{(q + y x^{k-1} |q\alpha - a|)^{1/k}}.
  \]
\end{remark}

\begin{remark}
  One can estimate the exponential sums
  \begin{equation*}
    \sum_{m_1\sim M_1} \sum_{m_2\sim M_2} \xi(m_1,m_2) \sum_{x<m_1m_2n\leq x+y} e\left((m_1m_2n)^k \alpha\right)
  \end{equation*}
  with some suitable conditions on $M_1$ and $M_2$ as \cite[Lemma 3.2]{kumchev2013weyl} and \cite[Lemma 4.2]{wei2014sums} did, and then may give a better result than Proposition \ref{prop: Type I}. Since it has no influence on our main results, we will not do it.
\end{remark}

\section{Type II estimate} \label{sec: Type II}

To prove Theorem \ref{thm: exponential sums}, we also need to handle the exponential sums of type II.
Let $\xi(m)$ and $\eta(n)$ be arithmetic functions satisfying the property that for all natural numbers $m$ and $n$, one has
\begin{equation}
  \xi(m) \ll m^\varepsilon \quad \textrm{and} \quad  \eta(n) \ll n^\varepsilon.
\end{equation}
Let $M$ and $N$ be positive parameters, and define the exponential sum
$\mathcal{T}_2 = \mathcal{T}_{2}(\alpha;M)$ by
\begin{equation}
  \mathcal{T}_2(\alpha;M) :=  \sum_{M<m\leq 2M} \xi(m) \sum_{ x< mn\leq x+y} \eta(n) e\left( (mn)^k \alpha \right).
\end{equation}
The following proposition gives an estimate for $\mathcal{T}_2$ which is an improvement of \cite[Lemma 3.1]{kumchev2013weyl}

\begin{proposition} \label{prop: Type II}
  Let $k,\gamma,\sigma_k$ be as in Proposition \ref{prop: Type I}. Let $0 < \rho < \min\{\sigma_k /(8\gamma), \delta\}$.
  Suppose that $\alpha$ is real that there exist integers $a$ and $q$ such that (\ref{eqn: Dirichlet's theorem on Diophantine approximations}) holds with $Q$ given by (\ref{eqn: Q}).
  And let $x$ and $y$ be positive numbers with
  \begin{equation} \label{eqn: conditions of y. type II}
    y = x^\theta, \quad  \frac{1}{(1-2\rho)} \frac{3\gamma-\sigma_k-1}{2(2\gamma-\sigma_k-1)} \leq \theta \leq 1.
  \end{equation}
  Then
  \begin{equation*}
    \mathcal{T}_2 \ll y^{1-\rho +\varepsilon} + \frac{y x^\varepsilon}{(q + y^2 x^{k-2} |q\alpha - a|)^{1/(2k)}},
  \end{equation*}
  provided that
  \begin{equation} \label{eqn: conditions of M. type II}
    x^{1/2} \leq M \ll y^{1-2\rho}.
  \end{equation}
\end{proposition}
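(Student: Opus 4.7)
The plan is to combine a Cauchy--Schwarz step with a Weyl-type shift, then apply Lemma \ref{lemma: exponential sums in short intervals} to the resulting inner exponential sum and Lemma \ref{lemma: some inequalities} to handle the averaging over the shifted variable, closely following the strategy of Kumchev \cite[Lemma 3.1]{kumchev2013weyl} but substituting the sharper Lemma \ref{lemma: exponential sums in short intervals} of the present paper in place of his Lemma 2.2.

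First, I would apply Cauchy--Schwarz to the outer sum over $m$ and expand the square, yielding
\[
|\mathcal{T}_2|^2 \ll M^{1+\varepsilon} \sum_{n_1,n_2\sim N}\eta(n_1)\overline{\eta(n_2)} \sum_{m}I(m,n_1)I(m,n_2)\,e\!\left(m^{k}(n_1^{k}-n_2^{k})\alpha\right),
\]
where $N=y/M$ and $I(m,n)$ is the characteristic function of the condition $x<mn\leq x+y$. The diagonal $n_1=n_2$ contributes trivially $O(M^{2}Nx^{\varepsilon})$ to $|\mathcal{T}_2|^2$, which by the hypothesis $M\leq y^{1-2\rho}$ from \eqref{eqn: conditions of M. type II} is $\ll y^{2-2\rho+\varepsilon}$, of acceptable size after extracting the square root. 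For the off-diagonal, write $n_2=n_1+h$ with $1\leq |h|\leq N$, so that $n_2^{k}-n_1^{k}=hR(n_1,h)$, and the inner sum over $m$ becomes $\sum_{m}e(m^{k}\beta)$ with $\beta=-hR(n_1,h)\alpha$, taken over an interval of length $O(M)$ lying in a range of size $\sim x/N$.

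To this inner exponential sum I would apply Lemma \ref{lemma: exponential sums in short intervals} with $x_{\mathrm{inner}}\sim x/N$ and $y_{\mathrm{inner}}\sim M$; the condition \eqref{eqn: conditions of y. type II} on $\theta$, combined with $M\leq y^{1-2\rho}$, is precisely what is needed to verify the range hypothesis $M\geq (x/N)^{\gamma/(2\gamma-\sigma_k-1)}$ of Lemma \ref{lemma: exponential sums in short intervals}. The ``minor arcs'' alternative yields $M^{1-\rho_{0}+\varepsilon}$, which, summed trivially over $n_1$ and $h$, contributes an acceptable amount to $|\mathcal{T}_2|^2$ as long as the exponent $\rho_0$ produced by Lemma \ref{lemma: exponential sums in short intervals} is chosen appropriately large compared with $\rho$ (the hypothesis $\rho<\sigma_k/(8\gamma)$ permits this). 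The ``major arcs'' alternative supplies integers $r',a'$ approximating $\beta$ to within $(x/N)^{1-k}M^{k\rho_{0}-1}$, from which, following the Dirichlet-approximation bookkeeping in the proof of Proposition \ref{prop: Type I}, one extracts a rational approximation $b/r$ to $\alpha$ itself with $r=r'/(r',hR(n_1,h))$.

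At this stage the contribution to $|\mathcal{T}_2|^2$ from this alternative takes the shape
\[
\frac{M^{1+\varepsilon}N^{1+\varepsilon}}{(r+NM^{k-1}|r'\beta-a'|)^{1/k}}\sum_{h}\sum_{\substack{n_1\sim N\\(n_1,h)=1}}(r,R(n_1,h))^{1/k},
\]
and Lemma \ref{lemma: some inequalities} bounds the inner double sum by $O(Nh^\varepsilon+r^{1/k+\varepsilon})$. A case analysis on the sizes of $r$ and $|r\alpha-b|$, exactly parallel to Cases 1--3 in the proof of Proposition \ref{prop: Type I}, either absorbs the resulting estimate into the term $y^{1-\rho+\varepsilon}$, or else forces $b/r=a/q$, at which point taking square roots produces the claimed denominator $(q+y^{2}x^{k-2}|q\alpha-a|)^{1/(2k)}$. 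The appearance of the exponent $1/(2k)$ rather than $1/k$, and of $y^{2}x^{k-2}$ rather than $yx^{k-1}$, reflects precisely the rescaling from the inner parameters $(x/N,M)$ back to $(x,y)$ after extracting the square root from Cauchy--Schwarz. The main obstacle is verifying that the hypotheses \eqref{eqn: conditions of y. type II} and \eqref{eqn: conditions of M. type II} are sharp enough both to invoke Lemma \ref{lemma: exponential sums in short intervals} with a useful $\rho_0$ and to force the extracted approximation $b/r$ to match $a/q$; the delicate work is precisely this parameter bookkeeping between the inner and outer scales.
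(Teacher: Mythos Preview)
Your high-level architecture (Cauchy--Schwarz on $m$, then Lemma~\ref{lemma: exponential sums in short intervals} on the resulting $m$-sum, then Dirichlet-approximation bookkeeping with Lemma~\ref{lemma: some inequalities}) is exactly right and matches the paper's approach. However, the middle of the argument is not merely ``parallel to Cases 1--3 in Proposition~\ref{prop: Type I}''; it is substantially more involved, and your compression of it hides a genuine gap. After Cauchy and Lemma~\ref{lemma: exponential sums in short intervals}, the factor multiplying $\alpha$ is $n_2^k-n_1^k$, which depends on \emph{two} free variables, not one as in the Type~I case. The paper handles this by first substituting $d=(n_1,n_2)$, $n=n_1/d$, $h=(n_2-n_1)/d$, so that $n_2^k-n_1^k=hd^kR(n,h)$ with $(n,h)=1$; the coprimality here is not optional, since Lemma~\ref{lemma: some inequalities} (specifically \eqref{eqn: sum with R(n,h)}) requires it. You write the condition $(n_1,h)=1$ into your sum without having earned it.

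More seriously, a single Dirichlet approximation of $\alpha$ cannot do the job. The paper uses a \emph{three-stage cascade}: first approximate $hd^k\alpha$ by $b_1/r_1$ and show $r=r_1/(r_1,R(n,h))$, then sum over $n$ via \eqref{eqn: sum with R(n,h)}; next approximate $d^k\alpha$ by $b_2/r_2$ and show $r_1=r_2/(r_2,h)$, then sum over $h$; finally link $b_2/r_2$ to $a/q$ via $r_2=q/(q,d^k)$ and sum over $d$. Each layer peels off one factor from $hd^kR(n,h)$ and requires its own $|\cdots|<1$ verification (this is, incidentally, where the hypothesis $M\geq x^{1/2}$ is actually used). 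Your displayed expression places $\sum_h\sum_{n_1}$ outside a denominator that still depends on $h$ and $n_1$ through $\beta$, which cannot be right. A smaller point: your inner-sum parameters are swapped---the $m$-sum lives at location $\asymp M$ with length $\asymp yM/x$, not the other way around---so the range check for Lemma~\ref{lemma: exponential sums in short intervals} needs to be redone accordingly.
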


\begin{proof}
  Set $N = x/M$, $X=x/N$ and $Y = y/N = yM/x$. Define $\nu$ by $Y^\nu = x^{2\rho}L^{-1}$.
  By (\ref{eqn: conditions of M. type II}), we have
  $$
    \nu < \sigma_k/\gamma.
  $$
  For $n_1,\ n_2 \leq 2N$, let
  \begin{equation*}
    \mathcal{M}(n_1,n_2) = \{ m\in (M,2M] : x < mn_1, mn_2 \leq x+y \}.
  \end{equation*}
  By Cauchy's inequality and an interchange of the order of summation, we have
  \begin{equation} \label{eqn: mathcal{T}_2 to T_1}
    \mathcal{T}_2^2 \ll y^{1+\varepsilon} M + M x^\varepsilon T_1(\alpha),
  \end{equation}
  where
  \begin{equation*}
    T_1(\alpha) = \sum_{n_1<n_2} \left| \sum_{m\in \mathcal{M}(n_1,n_2)} e\left( \alpha (n_2^k - n_1^k) m^k \right) \right|.
  \end{equation*}
  Let $\mathcal{N}$ denote the set of pairs $(n_1,n_2)$ with $n_1<n_2$ and $\mathcal{M}(n_1,n_2)\neq \varnothing$ for which there exist integers $b$ and $r$ such that
  \begin{equation} \label{eqn: conditions of r and b. Type II}
    1\leq r \leq Y^{k\nu}, \quad (b,r)=1, \quad |r(n_2^k-n_1^k)\alpha - b|\leq Y^{k\nu-1}X^{1-k}.
  \end{equation}
  Since $N/2< n_1 < n_2 \leq 2N$ and $\mathcal{M}(n_1,n_2)\neq \varnothing$, we have $n_2 - n_1 \leq yx^{-1}n_1$. Hence $\#\mathcal{N} \ll xyM^{-2}$. In order to handle the inner summation in $T_1(\alpha)$, we set
  \begin{eqnarray*}
    X_1 &=& \max\left\{ M,\frac{x}{n_1} \right\} \asymp M = \frac{x}{N} = X, \\
    Y_1 &=& \min\left\{ 2M,\frac{x+y}{n_2} \right\} - \max\left\{ M,\frac{x}{n_1} \right\} \ll \frac{y}{N} = Y.
  \end{eqnarray*}
  If $Y_1 < X_1^{\gamma/(2\gamma-\sigma_k-1)}$, by (\ref{eqn: conditions of y. type II}) and (\ref{eqn: conditions of M. type II}), the contribution to $T_1(\alpha)$ is
  \begin{equation*}
    \ll  xy M^{-2} M^{\gamma/(2\gamma-\sigma_k-1)} \ll  y^{2-2\rho +\varepsilon} M^{-1}.
  \end{equation*}
  If $Y_1 \geq X_1^{\gamma/(2\gamma-\sigma_k-1)}$, since $\nu<\sigma_k/d$, we can apply Lemma \ref{lemma: exponential sums in short intervals} with $\rho=\nu$, $x=X_1$ and $y=Y_1$ to the inner summation in $T_1(\alpha)$. We get
  \begin{equation} \label{eqn: T_1 to T_2}
    T_1(\alpha) \ll  y^{2-2\rho +\varepsilon} M^{-1} + T_2(\alpha),
  \end{equation}
  where
  \begin{equation*}
    T_2(\alpha) = \sum_{(n_1,n_2)\in \mathcal{N}} \frac{Y}{\left( r + Y X^{k-1} |r(n_2^k - n_1^k)\alpha - b| \right)^{1/k}}.
  \end{equation*}

  We now change the summation variables in $T_2(\alpha)$ to
  \begin{equation*}
    d= (n_1,n_2), \quad n=n_1/d, \quad h = (n_2-n_1)/d.
  \end{equation*}
  We obtain
  \begin{equation} \label{eqn: T_2}
    T_2(\alpha) \ll \sum_{dh\leq y/M} {\sum_n}' \frac{Y}{\left( r + Y X^{k-1} |rh d^k R(n,h)\alpha - b| \right)^{1/k}},
  \end{equation}
  where $R(n,h) = ((n+h)^k - n^k)/h$ and the inner summation is over $n$ with $(n,h)=1$ and $(nd,(n+h)d)\in \mathcal{N}$.
  For each pair $(d,h)$ appearing in the summation on the right side of (\ref{eqn: T_2}), Dirichlet's theorem on Diophantine approximation yields integers $b_1$ and $r_1$ with
  \begin{equation} \label{eqn: conditions of r_1 and b_1. Type II}
    1\leq r_1 \leq x^{-2k\rho} YX^{k-1}, \quad (b_1,r_1) = 1, \quad |r_1 hd^k\alpha - b_1| \leq x^{2k\rho} Y^{-1}X^{1-k}.
  \end{equation}
  As $R(n,h) \leq 4^k (N/d)^{k-1}$, combining (\ref{eqn: conditions of M. type II}), (\ref{eqn: conditions of r and b. Type II}) and (\ref{eqn: conditions of r_1 and b_1. Type II}), we have
  \begin{equation*}
    \begin{split}
      | b_1 r R(n,h) - br_1 | & = |rR(n,h)(b_1-r_1hd^k\alpha) + r_1(rhd^kR(n,h)\alpha-b)| \\
        &\leq  r_1 Y^{k\nu-1}X^{1-k} + r R(n,h) x^{2k\rho} Y^{-1}X^{1-k} \\
        &\leq  L^{-k} + 4^k N^{k-1} x^{2k\rho} L^{-k} x^{2k\rho} Y^{-1}X^{1-k} <1.
    \end{split}
  \end{equation*}
  Hence,
  \begin{equation} \label{eqn: b/r and r}
    \frac{b}{r} = \frac{b_1 R(n,h)}{r_1}, \quad r = \frac{r_1}{(r_1,R(n,h))}.
  \end{equation}
  Combining (\ref{eqn: T_2}) and (\ref{eqn: b/r and r}), we obtain
  \begin{equation*}
    T_2(\alpha) \ll \sum_{dh\leq y/M} \frac{Y}{\left( r_1 + Y X^{k-1} N_d^{k-1} |r_1 h d^k\alpha - b_1| \right)^{1/k}} \sum_{\substack{n\sim N_d \\ (n,h)=1}} (r_1,R(n,h))^{1/k},
  \end{equation*}
  where $N_d = N/d$. By Lemma \ref{lemma: some inequalities}, we deduce that
  \begin{equation} \label{eqn: T_2 to T_3}
    T_2(\alpha) \ll y^2 x^{-1+\varepsilon} + T_3(\alpha),
  \end{equation}
  where
  \begin{equation*}
    T_3(\alpha) = \sum_{dh\leq y/M} \frac{yx^\varepsilon /d}{\left( r_1 + Y X^{k-1} N_d^{k-1} |r_1 h d^k\alpha - b_1| \right)^{1/k}}.
  \end{equation*}

  We now write $\mathcal{H}$ for the set of pairs $(d,h)$ with $dh \leq y/M$ for which there exist integers $b_1$ and $r_1$ subject to
  \begin{equation} \label{eqn: restriction conditions of r_1 and b_1. Type II}
    1\leq r_1 \leq x^{2k\rho}, \quad  (b_1,r_1) = 1, \quad  |r_1hd^k\alpha - b_1| \leq x^{-k+1+2k\rho} Y^{-1}.
  \end{equation}
  We have
  \begin{equation} \label{eqn: T_3 to T_4}
    T_3(\alpha) \ll  y^{2-2\rho+\varepsilon} M^{-1} + T_4(\alpha),
  \end{equation}
  where
  \begin{equation*}
    T_4(\alpha) = \sum_{(d,h)\in \mathcal{H}} \frac{yx^\varepsilon /d}{\left( r_1 + Y X^{k-1} N_d^{k-1} |r_1 h d^k\alpha - b_1| \right)^{1/k}}.
  \end{equation*}
  For each $d\leq y/M$, Dirichlet's theorem on Diophantine approximation yields integers $b_2$ and $r_2$ with
  \begin{equation} \label{eqn: conditions of r_2 and b_2. Type II}
    1\leq r_2 \leq x^{k-1-2k\rho} Y/2, \quad  (b_2,r_2) = 1, \quad  |r_2 d^k \alpha - b_2|\leq 2 x^{-k+1+2k\rho} Y^{-1}.
  \end{equation}
  Combining (\ref{eqn: restriction conditions of r_1 and b_1. Type II}) and (\ref{eqn: conditions of r_2 and b_2. Type II}), we obtain
  \begin{equation*}
    \begin{split}
      |b_2r_1h - b_1r_2| & = |r_1h(b_2-r_2d^k\alpha)+r_2(r_1hd^k\alpha-b_1)| \\
          &\leq r_1 h |r_2d^k\alpha - b_2| + r_2 |r_1hd^k\alpha - b_1| \\
          &\leq 1/2 + 2 x^{-k+2+4k\rho} M^{-2} < 1,
    \end{split}
  \end{equation*}
  whence
  \begin{equation*}
    \frac{b_1}{r_1} = \frac{hb_2}{r_2}, \quad r_1 = \frac{r_2}{(r_2,h)}.
  \end{equation*}
  We write $Z_d = Y X^{k-1} N_d^{k-1} |r_2 d^k\alpha - b_2|$ and by Lemma \ref{lemma: some inequalities}, we get
  \begin{equation*}
    T_4(\alpha) = \sum_{(d,h)\in \mathcal{H}} \frac{yx^\varepsilon /d}{\left( r_1 + Z_d h \right)^{1/k}} (r_2,h)^{1/k} \ll \sum_{d\leq y/M} \frac{y^2x^\varepsilon M^{-1}}{d^2 (r_2 + y(Md)^{-1}Z_d )^{1/k}}.
  \end{equation*}
  Hence
  \begin{equation} \label{eqn: T_4 to T_5}
    T_4(\alpha) \ll y^{2-2\rho+\varepsilon}M^{-1} + T_5(\alpha),
  \end{equation}
  where
  \begin{equation*}
    T_5(\alpha) = \sum_{d\in \mathcal{D}} \frac{y^2 x^\varepsilon M^{-1}}{d^2 (r_2 + y(Md)^{-1}Z_d )^{1/k}},
  \end{equation*}
  and $\mathcal{D}$ is the set of integers $d\leq x^{2\rho}$ for which there exist integers $b_2$ and $r_2$ with
  \begin{equation} \label{eqn: restriction conditions of r_2 and b_2. Type II}
    1\leq r_2 \leq x^{2k\rho}, \quad  (b_2,r_2)=1, \quad  |r_2 d^k \alpha - b_2| \leq y^{-2} x^{2-k+2k\rho}.
  \end{equation}
  Combining (\ref{eqn: Q}), (\ref{eqn: Dirichlet's theorem on Diophantine approximations}) and (\ref{eqn: restriction conditions of r_2 and b_2. Type II}), we deduce that
  \begin{equation*}
    \begin{split}
      |r_2 d^k a - b_2 q| & = |r_2 d^k (a-q\alpha) + q(r_2 d^k \alpha - b_2)| \\
          & \leq  r_2 d^k Q^{-1} + q |r_2 d^k \alpha - b_2| \\
          & \leq  x^{4k\rho} Q^{-1} + y^{-2} x^{2-k+2k\rho} Q < 1,
    \end{split}
  \end{equation*}
  whence
  \begin{equation*}
    \frac{b_2}{r_2} = \frac{d^k a}{q} , \quad  r_2 = \frac{q}{(q,d^k)}.
  \end{equation*}
  Thus, recalling Lemma \ref{lemma: some inequalities}, we get
  \begin{equation} \label{eqn: T_5}
    T_5(\alpha) \ll  \frac{y^2 x^\varepsilon M^{-1}}{(q+ y^2 x^{k-2} |q\alpha - a|)^{1/k}}  \sum_{d\leq x^{2\rho}} \frac{(q,d^k)^{1/k}}{d^2}
    \ll  \frac{y^2 x^\varepsilon M^{-1}}{(q+ y^2 x^{k-2} |q\alpha - a|)^{1/k}} .
  \end{equation}

  The desired estimate follows from (\ref{eqn: conditions of y. type II}), (\ref{eqn: conditions of M. type II}), (\ref{eqn: mathcal{T}_2 to T_1}), (\ref{eqn: T_1 to T_2}), (\ref{eqn: T_2 to T_3}), (\ref{eqn: T_3 to T_4}), (\ref{eqn: T_4 to T_5}) and (\ref{eqn: T_5}).
\end{proof}

\section{Proof of Theorem \ref{thm: exponential sums}} \label{sec: proof of Theoerem 1}

In this section we deduce Theorem \ref{thm: exponential sums} from Propositions \ref{prop: Type I}
and \ref{prop: Type II} and Vaughan's identity for $\Lambda(n)$.

\begin{proof}[Proof of Theorem \ref{thm: exponential sums}]
  Let 
  \begin{equation} \label{eqn: U & V}
    U = x^{\theta/2 - \rho}, \quad  V = x^{1 - \theta + 2\rho}.
  \end{equation}
  By (\ref{eqn: rho}), we have
  \begin{equation} \label{eqn: UV & x/U}
    UV \asymp (x+y)/U \asymp x^{1-\theta/2 + \rho} \ll y^{1-2\rho}.
  \end{equation}
  And then we apply Vaughan's identity (see \cite{vaughan1980recent}) in the following form
  \begin{equation}
    \Lambda(n) = \sum_{\substack{md=n \\ 1\leq d \leq V}} \mu(d) \log m - \sum_{\substack{lmd=n \\ 1\leq d\leq V \\ 1\leq m\leq U}} \mu(d)\Lambda(m) - \sum_{\substack{lmd=n\\ 1\leq d\leq V \\ m> U \\ ld> V}}  \mu(d) \Lambda(m).
  \end{equation}
  Thus we deduce that
  \begin{equation} \label{eqn: S_k to S_1+S_2+S_3}
    S_k(x,y;\alpha) = S_1 - S_2 -S_3,
  \end{equation}
  where
  \begin{eqnarray*}
    S_1 &=& \sum_{1\leq d\leq V} \mu(d) \sum_{x< md \leq x+y} (\log m) e\left((md)^k\alpha\right), \\
    S_2 &=& \sum_{1\leq v\leq UV} \lambda_0(v) \sum_{x< lv \leq x+y} e\left((lv)^k\alpha\right),\\
    S_3 &=& \sum_{V < u\leq (x+y)/U} \lambda_1(u) \sum_{\substack{x< mu \leq x+y \\ m> U}} \Lambda(m) e\left((mu)^k\alpha\right),
  \end{eqnarray*}
  in which
  \begin{equation*}
    \lambda_0(v) = \sum_{\substack{md=v \\ 1\leq d\leq V \\ 1\leq m\leq U}} \mu(d)\Lambda(m) \quad \textrm{and} \quad  \lambda_1(u) = \sum_{\substack{d|u \\ 1\leq d \leq V}} \mu(d).
  \end{equation*}

  We begin with estimating the sum $S_3$. Take
  \begin{equation} \label{eqn: d}
    \gamma = (\theta - 3/4)^{-1}.
  \end{equation}
  Since $3/4<\theta\leq 1$, by (\ref{eqn: rho}), we have
  \begin{equation*}
    \frac{1}{(1-2\rho)} \frac{3\gamma-\sigma_k-1}{2(2\gamma-\sigma_k-1)} \leq \theta \leq 1.
  \end{equation*}
  To apply Proposition \ref{prop: Type II}, we further divide $S_3$ into to two parts
  \begin{equation*}
    S_{31} = \sum_{x^{1/2} \leq u \leq (x+y)/U} \lambda_1(u) \sum_{\substack{x< mu \leq x+y \\ m> U}} \Lambda(m) e\left((mu)^k\alpha\right),
  \end{equation*}
  and
  \begin{equation*}
    S_{32} = \sum_{V < u < x^{1/2}} \lambda_1(u) \sum_{\substack{x< mu \leq x+y \\ m> U}} \Lambda(m) e\left((mu)^k\alpha\right).
  \end{equation*}
  On noting that (\ref{eqn: U & V}), (\ref{eqn: UV & x/U}) and $\lambda_1(u) \leq \tau(u)$, we can divide the summation over $u$ into dyadic intervals to deduce from Proposition \ref{prop: Type II} that
  \begin{eqnarray*}
    S_{31} &\ll&  (\log x) \max_{x^{1/2} \leq M \leq (x+y)/U} \left| \sum_{u \sim M} \xi(u) \sum_{x< mu \leq x+y} \eta(m) e\left((mu)^k\alpha\right) \right|\\
           &\ll&  y^{1-\rho +\varepsilon} + \frac{y x^\varepsilon}{(q + y^2 x^{k-2} |q\alpha - a|)^{1/(2k)}},
  \end{eqnarray*}
  where $\xi(u) = \lambda_1(u)$, and $\eta(m) = \Lambda(m)$ if $m>U$ and is 0 if else.
  For $S_{32}$, we first interchange the order of summation, and then by the same argument as above, we obtain
  \begin{equation*}
    S_{32} \ll y^{1-\rho +\varepsilon} + \frac{y x^\varepsilon}{(q + y^2 x^{k-2} |q\alpha - a|)^{1/(2k)}}.
  \end{equation*}
  Hence we get
  \begin{equation} \label{eqn: S_3}
    S_{3} \ll y^{1-\rho +\varepsilon} + \frac{y x^\varepsilon}{(q + y^2 x^{k-2} |q\alpha - a|)^{1/(2k)}}.
  \end{equation}

  Next we estimate $S_2$. Write
  \begin{equation*}
    S_4(Z,W) = \sum_{Z < v \leq W} \lambda_0(v) \sum_{x< lv \leq x+y} e\left((lv)^k\alpha\right).
  \end{equation*}
  Then we find that
  \begin{equation} \label{eqn: S_2 to S_4}
    S_2 = S_4(0,V) + S_4(V,UV).
  \end{equation}
  Note that (\ref{eqn: U & V}), (\ref{eqn: UV & x/U}) and the bound $|\lambda_0(v)| \leq \log v$, we deduce from Proposition \ref{prop: Type II} that
  \begin{equation} \label{eqn: S_4(V,UV)}
    S_4(V,UV) \ll y^{1-\rho +\varepsilon} + \frac{y x^\varepsilon}{(q + y^2 x^{k-2} |q\alpha - a|)^{1/(2k)}}.
  \end{equation}
  We then estimate $S_4(0,V)$.
  Since $3/4<\theta\leq 1$, by (\ref{eqn: rho}), (\ref{eqn: U & V}) and (\ref{eqn: d}), we have
  \begin{equation*}
    V \ll y \left( \frac{y}{x} \right)^{\frac{\gamma+1}{\gamma-\sigma_k-1}}, \quad
    V \ll y x^{-\gamma\rho/\sigma_k}, \quad
    V^{2k} \ll y x^{k-1-2k\rho}.
  \end{equation*}
  So we can divide the summation over $v$ into dyadic intervals to deduce from Proposition \ref{prop: Type I} that
  \begin{equation} \label{eqn: S_4(0,V)}
    S_4(0,V) \ll y^{1-\rho +\varepsilon} + \frac{y x^\varepsilon}{(q + y x^{k-1} |q\alpha - a|)^{1/k}}.
  \end{equation}
  Thus, by combining (\ref{eqn: S_4(V,UV)}) and (\ref{eqn: S_4(0,V)}), we deduce from (\ref{eqn: S_2 to S_4}) that
  \begin{equation} \label{eqn: S_2}
    S_2 \ll y^{1-\rho +\varepsilon} + \frac{y x^\varepsilon}{(q + y^2 x^{k-2} |q\alpha - a|)^{1/(2k)}}.
  \end{equation}

  Finally, in order to estimate $S_1$, we apply (\ref{eqn: Type I with log}) directly, proceeding as in the treatment of $S_4(0,V)$. Thus we again obtain the bound
  \begin{equation} \label{eqn: S_1}
    S_1 \ll y^{1-\rho +\varepsilon} + \frac{y x^\varepsilon}{(q + y^2 x^{k-2} |q\alpha - a|)^{1/(2k)}}.
  \end{equation}
  Theorem \ref{thm: exponential sums} follows from (\ref{eqn: S_k to S_1+S_2+S_3}), (\ref{eqn: S_3}), (\ref{eqn: S_2}) and (\ref{eqn: S_1}).
\end{proof}

\section{Proof of Theorem \ref{thm: sums of powers of almost equal primes}} \label{sec: proof of Theorem 2}

We outline our proof of Theorem \ref{thm: sums of powers of almost equal primes}, which proceeds via the circle method. Suppose that $k$ and $s$ are integers with $k\geq 2$ and $s\geq t_k$, where $t_k$ is defined as in (\ref{eqn: K & t_k}).
Let $\theta$ be a real number with $3/4 < \theta < 1$,
and let $\delta$ be a sufficiently small, but fixed, positive number with $4K\delta < \min\{ \theta - 3/4, 1 - \theta \}$.
Consider a sufficiently large natural number $N$, put $X = (N/s)^{1/k}$, and write $Y = X^\theta$.
When $n$ is a natural number with $N\leq n \leq N+X^{k-1}Y$, we denote
\begin{equation*}
  \rho_s(n) = \sum_{|p_1-X|\leq Y} \cdots \sum_{|p_s-X|\leq Y} (\log p_1) \cdots (\log p_s),
\end{equation*}
which is the weighted number of solutions of the equation (\ref{eqn: equation}) with $|p_i - X|\leq Y\ (1\leq i \leq s)$.
Define
\begin{equation}
  f(\alpha) = \sum_{\substack{|p-X|\leq Y\\ p\ \mathrm{prime}}} (\log p) e(p^k\alpha).
\end{equation}
Then it follows from orthogonality that
\begin{equation} \label{eqn: rho_s(n)}
  \rho_s(n) = \int_0^1 f(\alpha)^s e(-n\alpha) d\alpha.
\end{equation}

Next we define the Hardy--Littlewood dissection. We rewrite $P$ and $Q$ to be
\begin{equation*}
  P = X^{2K\delta}, \quad   Q = X^{k-2}Y^2P^{-1}.
\end{equation*}
Then let $\mathfrak{M}$ and $\mathfrak{m}$ be the major arc and minor arc as in \S \ref{sec: introduction}, respectively, with $P$ and $Q$ defined above.
When $\mathfrak{B}$ is a measurable subset of $[0,1)$, we define
\begin{equation}
  \rho_s(n;\mathfrak{B}) = \int_{\mathfrak{B}} f(\alpha)^s e(-n\alpha) d\alpha.
\end{equation}
Thus, since $[0,1)$ is the disjoint union of $\mathfrak{M}$ and $\mathfrak{m}$, we find from (\ref{eqn: rho_s(n)}) that
\begin{equation}
  \rho_s(n) = \rho_s(n;\mathfrak{M}) + \rho_s(n;\mathfrak{m}).
\end{equation}

The major arc contribution can be summarised in the following proposition.

\begin{proposition} \label{prop: major arcs}
  Suppose that $k\geq 2$ and $s\geq \min\{ 5,k+2 \}$. Then, whenever $19/24 < \theta < 1$, $Y = X^\theta$ and $n$ is a natural number with $N \leq n \leq N + X^{k-1}Y$, we have
  \begin{equation}
    \rho_s(n;\mathfrak{M}) = \mathfrak{S}(n) \mathfrak{J}(n) + O(Y^{s-1}X^{1-k}(\log X)^{-1}),
  \end{equation}
  where the singular integral
  \begin{equation*}
    \mathfrak{J}(n) = \int_0^1 v(\beta)^s e(-\beta n) d\beta
  \end{equation*}
  with
  \begin{equation*}
    v(\beta) = k^{-1} \sum_{(X-Y)^k \leq m \leq (X+Y)^k} m^{-1+1/k} e(\beta m),
  \end{equation*}
  and the singular series
  \begin{equation*}
    \mathfrak{S}(n) = \sum_{q=1}^{\infty} \varphi(q)^{-s} \sum_{\substack{a=1 \\ (a,q)=1}}^{q} S(q,a)^s e(-na/q).
  \end{equation*}
  with
  \begin{equation*}
    S(q,a) = \sum_{\substack{r=1 \\ (q,r)=1}}^{q} e(a r^k/q).
  \end{equation*}
  Moreover, we have
  \begin{equation}
    Y^{s-1} X^{1-k} \ll \mathfrak{S}(n) \mathfrak{J}(n) \ll Y^{s-1} X^{1-k} (\log X)^\eta,
  \end{equation}
  where $\eta = \eta(s,k)$ is a positive number.
\end{proposition}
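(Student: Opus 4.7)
The proof follows the classical major arc analysis of the circle method, adapted to the short interval setting. The plan is to approximate $f(\alpha)$ on each $\mathfrak{M}(q,a) \subset \mathfrak{M}$ by the model
\[
  M(\alpha) = \varphi(q)^{-1} S(q,a) v(\alpha - a/q),
\]
replace $f^s$ by $M^s$ on the major arcs with acceptable error, and complete the resulting truncated Farey dissection into the full singular series $\mathfrak{S}(n)$ and singular integral $\mathfrak{J}(n)$.

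The key analytic input is a short-interval prime number theorem in arithmetic progressions, uniform in moduli $q \leq P = X^{2K\delta}$: one needs
\[
  \sum_{\substack{|p-X|\leq Y \\ p \equiv a \,(\mathrm{mod}\, q)}} \log p = \frac{2Y}{\varphi(q)} + O\!\left(\frac{Y}{\varphi(q) (\log X)^{A}}\right)
\]
uniformly for $(a,q)=1$ and any $A>0$. Since $P$ is a very small power of $X$ and, by hypothesis, $Y = X^\theta$ with $\theta > 19/24$, one may deduce this from the short-interval zero-density estimates that underlie Huxley's theorem (which handles $q=1$ as soon as $\theta > 7/12$) combined with a Siegel--Walfisz style treatment at small moduli. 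Partial summation then yields the pointwise approximation $f(\alpha) = M(\alpha) + O(Y (\log X)^{-A})$ for $\alpha \in \mathfrak{M}$.

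To lift this to the $s$-th power, write $f^s - M^s = (f-M)\sum_{j=0}^{s-1} f^{j} M^{s-1-j}$, integrate against $e(-n\alpha)$ over $\mathfrak{M}$, and apply a Hua-type moment estimate for $\int_{\mathfrak{M}}(|f|^{s-1} + |M|^{s-1})\,d\alpha$, exploiting the pointwise decay $|M(\alpha)| \ll Y(q+X^{k-1}Y|q\alpha - a|)^{-1/k}q^\varepsilon$ together with the hypothesis $s \geq \min\{5,k+2\}$ (the Hua-type threshold below which the relevant $s$-th moment integral ceases to be dominated by the diagonal). Choosing $A$ sufficiently large produces the error term $O(Y^{s-1}X^{1-k}(\log X)^{-1})$ demanded by the proposition. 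The remaining completion argument is standard: extend the $\beta$-integral from $|\beta|\leq (qQ)^{-1}$ to all of $\mathbb{R}$ using the decay estimate $v(\beta) \ll \min(Y, X^{1-k}|\beta|^{-1})$, and extend the $q$-sum from $q\leq P$ to $q\geq 1$ using $|S(q,a)|\ll q^{1-1/k+\varepsilon}$ together with $\varphi(q)\gg q^{1-\varepsilon}$. The stated bounds on $\mathfrak{S}(n)\mathfrak{J}(n)$ then follow from the Euler product factorisation $\mathfrak{S}(n) = \prod_p \chi_p(n)$, whose local densities $\chi_p(n)$ are positive under the congruence $n \equiv s \pmod{R}$, together with a direct Fourier evaluation of $\mathfrak{J}(n)$.

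The main obstacle is the first step: securing the short-interval Siegel--Walfisz theorem with a power saving in $\log X$, uniformly for $q$ up to a small power of $X$. The exponent $\theta > 19/24$ is exactly what the existing short-interval zero-density machinery (and its extension to characters modulo $q \leq P$) requires in order to deliver this saving across the whole major arc range, and this is precisely the threshold visible in the hypotheses of the proposition.
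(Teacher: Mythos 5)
The paper offers no proof of this proposition: it is quoted verbatim from Wei and Wooley \cite{wei2014sums} (their Proposition 2.1 and eq.\ (2.7)), so the comparison must be with the proof in that cited source. Your overall architecture --- local model $M(\alpha)=\varphi(q)^{-1}S(q,a)v(\alpha-a/q)$, telescoping $f^s-M^s$, then completing the singular series and singular integral --- is the right shape, but the central analytic step as you describe it would fail. The major arcs here are wide: on $\mathfrak{M}(q,a)$ one has $|\alpha-a/q|\le (qQ)^{-1}$ with $Q=X^{k-2}Y^2P^{-1}$, so the quantity $X^{k-1}Y\,|\alpha-a/q|$ can be as large as $PX/(qY)\asymp X^{1-\theta+2K\delta}/q$, a positive power of $X$ when $q$ is small. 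Partial summation from a prime number theorem for progressions in the interval $|p-X|\le Y$ multiplies the prime-counting error by exactly this factor, so to obtain $f(\alpha)=M(\alpha)+O(Y(\log X)^{-A})$ pointwise on all of $\mathfrak{M}$ you would need a \emph{power} saving of size $X^{1-\theta+2K\delta}$ in that error term, uniformly for $q\le P$. Huxley-type zero-density estimates (and any Siegel--Walfisz variant) give only logarithmic savings, so the uniform pointwise approximation you posit is not available; ``Huxley for $q=1$ plus Siegel--Walfisz for small moduli'' does not close this gap, and the threshold $19/24$ does not arise in the way you suggest.

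What the cited proof actually does is avoid the pointwise approximation altogether: the explicit formula for $\psi(t,\chi)$ is inserted into $f(\alpha)$ on the major arcs, and the contribution of the zeros of the Dirichlet $L$-functions is controlled in mean square over $\alpha\in\mathfrak{M}$ (combined with pointwise bounds), using zero-density estimates and hybrid large-sieve inequalities. It is this $L^2$ treatment of the zero sums over the enlarged major arcs --- going back to Liu, L\"u and Zhan --- that produces the constraint $\theta>19/24$. Your completion step and the final bounds on $\mathfrak{S}(n)\mathfrak{J}(n)$ are standard and essentially fine (though note that absolute convergence of $\mathfrak{S}(n)$ for $s\ge 5$ rests on the square-root cancellation $S(p,a)\ll p^{1/2}$ at prime moduli; the bound $S(q,a)\ll q^{1-1/k+\varepsilon}$ you quote would by itself demand $s>2k$).
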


\begin{proof}
  See \cite[Proposition 2.1 and eq. (2.7)]{wei2014sums}.
\end{proof}

In order to estimate the minor arc contribution, we have the following analogue of Hua's lemma.

\begin{proposition} \label{prop: minor arcs, integral}
  Suppose that $y$ is a real number with $y \geq x^{1/2}$. Then whenever $s\geq 2 t_k$ and $\varepsilon > 0$, we have
  \begin{equation}
    \int_0^1 |f(\alpha)|^s d\alpha  \ll  y^{s-1} x^{1-k+\varepsilon}.
  \end{equation}
\end{proposition}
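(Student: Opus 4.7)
The plan is to interpolate from the critical $L^{2 t_k}$ norm of $f$, and then to reinterpret that norm as a weighted count of Diophantine equalities which is in turn controlled by the short-interval mean value estimate for integer $k$-th powers due to Daemen \cite{daemen2010asymptotic}.

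First I would dispatch the reduction to the critical exponent. Since Chebyshev's bound on $\sum_{|p - X| \leq Y} \log p$ yields $|f(\alpha)| \ll y$ uniformly in $\alpha$, for $s \geq 2 t_k$ the elementary inequality
\begin{equation*}
\int_0^1 |f(\alpha)|^s \, d\alpha \leq \Bigl( \sup_\alpha |f(\alpha)| \Bigr)^{s - 2 t_k} \int_0^1 |f(\alpha)|^{2 t_k} \, d\alpha
\end{equation*}
reduces the problem to showing $\int_0^1 |f(\alpha)|^{2 t_k} \, d\alpha \ll y^{2 t_k - 1} x^{1 - k + \varepsilon}$.

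Next I would pass from primes to integers. By orthogonality, the critical integral equals the weighted count
\begin{equation*}
\sum_{\substack{p_1, \ldots, p_{t_k}, q_1, \ldots, q_{t_k} \text{ prime} \\ |p_i - X|, |q_j - X| \leq Y \\ p_1^k + \cdots + p_{t_k}^k = q_1^k + \cdots + q_{t_k}^k}} (\log p_1) \cdots (\log p_{t_k})(\log q_1) \cdots (\log q_{t_k}).
\end{equation*}
Bounding each weight by $\log(2 X) \ll x^{\varepsilon}$ and enlarging the sum by dropping primality, this is at most $x^\varepsilon \, J_{t_k}(X, Y)$, where $J_{t_k}(X, Y)$ denotes the number of integer solutions of the same equation with $|n_i - X|, |m_j - X| \leq Y$.

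It remains to establish the Vinogradov-style bound $J_{t_k}(X, Y) \ll Y^{2 t_k - 1} X^{1 - k + \varepsilon}$. This matches the heuristic exactly: among the $\asymp Y^{2 t_k}$ tuples the sum $\sum n_i^k$ ranges over $\asymp X^{k - 1} Y$ distinct values, giving an expected coincidence count of $Y^{2 t_k - 1} X^{1 - k}$. The exponent $t_k = k(k - 1)$ is precisely calibrated to be the first level at which a bound of this strength is available, and the estimate is furnished by Daemen's short-interval mean value theorem \cite{daemen2010asymptotic}. This is the only substantive obstacle, but being a known result in the literature it suffices to invoke it; the hypothesis $y \geq x^{1/2}$ in the statement is the mild assumption required for Daemen's circle-method argument to go through. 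Assembling the three steps yields $\int_0^1 |f|^s \, d\alpha \ll y^{s - 2 t_k} \cdot y^{2 t_k - 1} x^{1 - k + \varepsilon} = y^{s - 1} x^{1 - k + \varepsilon}$ as claimed.
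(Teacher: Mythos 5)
Your argument is correct and coincides with the proof behind the paper's citation: the paper simply defers to Wei and Wooley \cite{wei2014sums}, Proposition 2.2, whose proof is exactly your chain of a trivial-bound interpolation down to the critical $2t_k$-th moment, orthogonality to pass to the weighted Diophantine count, removal of the primality and logarithmic weights at the cost of $x^\varepsilon$, and Daemen's localized mean value estimate $J_{t_k}(X,Y) \ll Y^{2t_k-1}X^{1-k+\varepsilon}$ valid for $Y \geq X^{1/2}$. Nothing essentially different from the source the paper invokes.
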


\begin{proof}
  See \cite[Proposition 2.2]{wei2014sums}.
\end{proof}

Next, by Theorem \ref{thm: exponential sums}, we establish a non-trivial estimate for $f(\alpha)$ throughout the set of minor arcs $\mathfrak{m}$.

\begin{proposition} \label{prop: exponential sums}
  Let $k\geq 3$. Let $\theta$ be a real number with $3/4 < \theta \leq 1$ and suppose that
  \begin{equation} \label{eqn: the value of rho}
    \varrho = \varrho_k(\theta) = \frac{1}{2} \min \left\{ \frac{\sigma_k (\theta - 3/4)}{8}, \delta \right\},
  \end{equation}
  where $\sigma_k = 1/(2t_k)$.
  Then, for any fixed $\varepsilon > 0$, we have
  \begin{equation}
    \max_{\alpha \in \mathfrak{m}} |f(\alpha)| \ll  Y^{1-\varrho +\varepsilon}.
  \end{equation}
\end{proposition}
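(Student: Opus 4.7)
The plan is to deduce the proposition directly from Theorem \ref{thm: exponential sums}, after passing from the sum over primes to the von Mangoldt sum. First I would observe that
\[
  f(\alpha) = \sum_{X - Y < n \leq X + Y} \Lambda(n) e(n^k \alpha) + O(X^{1/2} \log X),
\]
since the difference involves only proper prime powers $p^j$ with $j \geq 2$ in the interval $[X - Y, X + Y]$, of which there are $O(X^{1/2})$. The error term is clearly absorbed into $Y^{1 - \varrho + \varepsilon}$ because $Y = X^\theta$ with $\theta > 3/4$. Thus it suffices to bound $S_k(X - Y, 2Y; \alpha)$ for $\alpha \in \mathfrak{m}$.

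Next I would apply Theorem \ref{thm: exponential sums} with $x = X - Y$, $y = 2Y$, and with any admissible exponent $\rho$ satisfying $\varrho < \rho < \rho_k(\theta)$; concretely one may take $\rho = 3\varrho/2$, which is legitimate because $\varrho = \rho_k(\theta)/2$ by the definition \eqref{eqn: the value of rho}. This produces integers $a', q'$ coming from Dirichlet's theorem applied with modulus $Q' = (X - Y)^{k-2}(2Y)^2 (P')^{-1}$, $P' = (X - Y)^{2K\delta}$, such that
\[
  S_k(X - Y, 2Y; \alpha) \ll Y^{1 - \rho + \varepsilon} + \frac{Y X^\varepsilon}{\bigl( q' + Y^2 X^{k-2} |q' \alpha - a'| \bigr)^{1/(2k)}}.
\]
The first term is already acceptable, since $Y^{1 - \rho + \varepsilon} \leq Y^{1 - \varrho + \varepsilon'}$ once $\varepsilon$ is small relative to $\rho - \varrho$.

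For the second term, the main point is that $\alpha \in \mathfrak{m}$ forces $q'$ to be large. Since $x = X - Y \asymp X$ and $y \asymp Y$, the parameters $P', Q'$ are comparable (up to bounded constants) to the $P, Q$ used in the definition of $\mathfrak{m}$ in Section \ref{sec: proof of Theorem 2}. If $q' \leq P$, then $(a', q')$ would witness $\alpha \in \mathfrak{M}$, contradicting $\alpha \in \mathfrak{m}$; hence $q' \gg P = X^{2K \delta}$. Dropping the $|q'\alpha - a'|$ contribution, the second term is therefore
\[
  \ll Y X^\varepsilon P^{-1/(2k)} = Y X^{\varepsilon - K\delta/k}.
\]
The final check is that $X^{K\delta/k} \gg Y^{\varrho} = X^{\varrho \theta}$: since $\varrho \leq \delta/2$ by \eqref{eqn: the value of rho}, and $K/k = 2 t_k(t_k + 2)/k \geq 1$ for $k \geq 3$ (in fact much larger), we have $K\delta/k \geq \delta \geq 2\varrho \geq \varrho \theta$, giving the required saving.

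The only delicate point is the bookkeeping: one must verify that the minor-arc condition defined in Section \ref{sec: proof of Theorem 2} is preserved when passing from $(x, y) = (X, Y)$ to $(X - Y, 2Y)$ in applying Theorem \ref{thm: exponential sums}, and that the factor $\tfrac{1}{2}$ built into the definition of $\varrho$ leaves sufficient room between $\varrho$ and $\rho_k(\theta)$ for the Theorem's first error term to be reabsorbed; beyond that the proof is a direct substitution.
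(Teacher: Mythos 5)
Your argument is correct and is exactly the paper's (very terse) proof, fleshed out: the paper simply takes $x=X-Y$, $y=2Y$, invokes Theorem~\ref{thm: exponential sums}, and uses that $q>P$ on $\mathfrak{m}$, all of which you carry out with the right choice of $\rho$ strictly between $\varrho$ and $\rho_k(\theta)$ and the correct final comparison $X^{K\delta/k}\gg Y^{\varrho}$. The one bookkeeping point you flag is indeed benign: since $Q'=(X-Y)^{k-2-2K\delta}(2Y)^2\geq Q$ for large $X$, Dirichlet's approximation automatically satisfies $|q'\alpha-a'|\leq Q^{-1}$, so $q'\leq P$ really would place $\alpha$ in $\mathfrak{M}$, completing the contradiction.
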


\begin{proof}
  Take $x = X-Y, \ y = 2Y$. Recall that for $\alpha \in \mathfrak{m}$, we have $q > P$. Proposition \ref{prop: exponential sums} is an easy consequence of Theorem \ref{thm: exponential sums}.
\end{proof}

\begin{proof}[Proof of Theorem \ref{thm: sums of powers of almost equal primes}]
  Now following the same argument as in \cite[\S 2]{wei2014sums}, we give the proof of Theorem \ref{thm: sums of powers of almost equal primes} by combining Propositions \ref{prop: major arcs}, \ref{prop: minor arcs, integral} and \ref{prop: exponential sums}.
\end{proof}

\bigskip
\noindent
{\bf Acknowledgement.} The author would like to thank Professor Jianya Liu for his valuable advice and constant encouragement. The author also want to thank Bin Wei for explaining many details in their paper.

\bigskip
\bibliographystyle{amsplain}
\bibliography{hbrbib_exponentialsums}

\end{document}